%% 
%% Copyright 2007-2020 Elsevier Ltd
%% 
%% This file is part of the 'Elsarticle Bundle'.
%% ---------------------------------------------
%% 
%% It may be distributed under the conditions of the LaTeX Project Public
%% License, either version 1.2 of this license or (at your option) any
%% later version.  The latest version of this license is in
%%    http://www.latex-project.org/lppl.txt
%% and version 1.2 or later is part of all distributions of LaTeX
%% version 1999/12/01 or later.
%% 
%% The list of all files belonging to the 'Elsarticle Bundle' is
%% given in the file `manifest.txt'.
%% 
%% Template article for Elsevier's document class `elsarticle'
%% with harvard style bibliographic references

\documentclass[final,12pt]{elsarticle}

%% Use the option review to obtain double line spacing
%% \documentclass[authoryear,preprint,review,12pt]{elsarticle}

%% Use the options 1p,twocolumn; 3p; 3p,twocolumn; 5p; or 5p,twocolumn
%% for a journal layout:
%% \documentclass[final,1p,times,authoryear]{elsarticle}
%% \documentclass[final,1p,times,twocolumn,authoryear]{elsarticle}
%% \documentclass[final,3p,times,authoryear]{elsarticle}
%% \documentclass[final,3p,times,twocolumn,authoryear]{elsarticle}
%% \documentclass[final,5p,times,authoryear]{elsarticle}
%% \documentclass[final,5p,times,twocolumn,authoryear]{elsarticle}

%% For including figures, graphicx.sty has been loaded in
%% elsarticle.cls. If you prefer to use the old commands
%% please give \usepackage{epsfig}

%% The amssymb package provides various useful mathematical symbols
\usepackage{amssymb}
%% The amsthm package provides extended theorem environments
%% \usepackage{amsthm}
\usepackage{amsmath,amssymb,amscd,amsthm,verbatim,alltt,amsfonts,array}
\usepackage{mathrsfs}
\usepackage[english]{babel}
\usepackage{latexsym}
\usepackage{amssymb}
\usepackage{euscript}
\usepackage{graphicx}
\usepackage{enumerate}
\usepackage{nicematrix}
\usepackage{url}
\usepackage{mathtools}

\newcommand\norm[1]{\lVert#1\rVert}

\newcommand\scal[2]{\left\langle #1,#2\right\rangle}

\theoremstyle{definition}

\theoremstyle{remark}

\numberwithin{equation}{section}
%% The lineno packages adds line numbers. Start line numbering with
%% \begin{linenumbers}, end it with \end{linenumbers}. Or switch it on
%% for the whole article with \linenumbers.
%% \usepackage{lineno}

\newtheorem{theorem}{\bf Theorem}[section]
\newtheorem{definition}[theorem]{\bf Definition}
\newtheorem{lemma}[theorem]{\bf Lemma}
\newtheorem{corollary}[theorem]{\bf Corollary}
\newtheorem{remark}[theorem]{\bf Remark}

\newtheorem{conjecture}{\bf Conjecture}
\usepackage{enumerate}

\newcommand{\R}{\ensuremath{\mathcal{R}}}

\def\d{\begin{definition}}
	\def\ed{\end{definition}}
\def\t{\begin{theorem}}
	\def\et{\end{theorem}}

\def\B{{\mathfrak B}}

\def\H{{\cal H}}
\def\K{{\cal K}}
\def\L{{\cal L}}
\def\M{{\cal M}}
\def\T{{\cal T}}

\def\R{{\cal R}}

\def\V{{\cal V}}

\def\BK{\mathfrak{B}({\mathcal{K}})}

  % rank
  % index

\def\sbmatrix{\left[\begin{array}}
	\def\endsbmatrix{\end{array}\right]}
\def\d{\begin{definition}}
	\def\ed{\end{definition}}
\def\t{\begin{theorem}}
	\def\et{\end{theorem}}
\def\c{\begin{corollary}}
	\def\ec{\end{corollary}}
\def\l{\begin{lemma}}
	\def\el{\end{lemma}}

\journal{arXiv}

\begin{document}
	
	\begin{frontmatter}
		
		%% Title, authors and addresses
		
		%% use the tnoteref command within \title for footnotes;
		%% use the tnotetext command for theassociated footnote;
		%% use the fnref command within \author or \affiliation for footnotes;
		%% use the fntext command for theassociated footnote;
		%% use the corref command within \author for corresponding author footnotes;
		%% use the cortext command for theassociated footnote;
		%% use the ead command for the email address,
		%% and the form \ead[url] for the home page:
		%% \title{Title\tnoteref{label1}}
		%% \tnotetext[label1]{}
		%% \author{Name\corref{cor1}\fnref{label2}}
		%% \ead{email address}
		%% \ead[url]{home page}
		%% \fntext[label2]{}
		%% \cortext[cor1]{}
		%% \affiliation{organization={},
			%%            addressline={}, 
			%%            city={},
			%%            postcode={}, 
			%%            state={},
			%%            country={}}
		%% \fntext[label3]{}
		
		\title{Spherically quasinormal tuples: $n$-th root problem and hereditary properties}
		
		%% use optional labels to link authors explicitly to addresses:
		%% \author[label1,label2]{}
		%% \affiliation[label1]{organization={},
			%%             addressline={},
			%%             city={},
			%%             postcode={},
			%%             state={},
			%%             country={}}
		%%
		%% \affiliation[label2]{organization={},
			%%             addressline={},
			%%             city={},
			%%             postcode={},
			%%             state={},
			%%             country={}}
		
		\author{Hranislav Stankovi\' c}
		\affiliation{organization={Department of Mathematics, Faculty of Electronic Engineering},%Department and Organization
			addressline={Aleksandra Medvedeva 14}, 
			city={Nis},
			postcode={18115},
			country={Serbia}}
		\begin{abstract}
			In this paper, we provide several characterizations of a spherically quasinormal tuple $\mathbf{T}$  in terms of its normal extension, as well as in terms of powers of the associated elementary operator $\Theta_{\mathbf{T}}(I)$. Utilizing these results, we establish that the powers of spherically quasinormal tuples remain  spherically quasinormal. Additionally, we prove that the subnormal $n$-roots of spherically quasinormal tuples must also be spherically quasinormal, thereby resolving a multivariable version of a previously posed problem by Curto et al. in \cite{CurtoLeeYoon20}. Furthermore, we investigate the connection between a (pure) spherically quasinormal tuple $\mathbf{T}$, its minimal normal extension $\mathbf{N}$, and its dual $\mathbf{S}$. Among other things, we show that $\mathbf{T}$ inherits the spherical polar decomposition from $\mathbf{N}$.  Finally, we also demonstrate that $\mathbf{N}$ is Taylor invertible if and only if $\mathbf{T}$ and $\mathbf{S}$ have closed ranges.
		\end{abstract}
		
		%%Graphical abstract
		%\begin{graphicalabstract}
		%\includegraphics{grabs}
		%\end{graphicalabstract}
		
		%%Research highlights
		%\begin{highlights}
		%\item Research highlight 1
		%\item Research highlight 2
		%\end{highlights}
		
		\begin{keyword}
			spherically quasinormal tuples \sep subnormal tuples \sep minimal normal extension \sep spherical polar decomposition \sep Taylor invertibility
			
			%% PACS codes here, in the form: \PACS code \sep code
			
			%% MSC codes here, in the form: \MSC code \sep code
			\MSC[2010] 	47B20 \sep 	47A13   \sep 47A20
			%% or \MSC[2008] code \sep code (2000 is the default)
			
		\end{keyword}
		
	\end{frontmatter}
	
	\section{Introduction}
	Let $\mathcal{H}$ be a complex Hilbert space and let $\mathfrak{B}(\mathcal{H})$ denote the algebra of bounded linear operators on $\mathcal{H}$. With $0_\mathcal{H}$ and $I_\mathcal{H}$ we denote the zero and the identity operator, respectively, where the subscript is omitted if it does not lead to a  confusion regarding the space. Also, $\mathcal{N}(T)$ and $\mathcal{R}(T)$ shall represent the null-space and the range of an operator $T\in\mathfrak{B}(\mathcal{H})$. The adjoint of an operator $T\in\mathfrak{B}(\mathcal{H})$ will be denoted by $T^*$. A closed subspace $\L$ is a reducing subspace of an operator $T\in\B(\H)$ if it is invariant under both $T$ and $T^*$, i.e., if $T(\L)\subseteq \L$ and $T^*(\L)\subseteq \L$. For a closed subspace $\M\subseteq\H$, we write $P_\M$ to denote the orthogonal projection onto $\M$. The corestriction $A^{cr}$ of an operator $A: \H\to \K$ is defined as a map with domain $\H$, codomain $\R(A)$ and 
	\begin{equation*}
		A^{cr}x=Ax, \quad x\in \H.
	\end{equation*}
	\par 
	An operator $T$ is said to be \textit{normal} if it commutes with its adjoint, i.e., if $T^*T=TT^*$. Also,  $T$ is pure if it has no non-zero reducing subspace on which
	it is normal, i.e., if it has no normal direct summand.
	
	Due to the importance of the normal operators, their many generalizations have appeared over the decades. Definitely the most important classes of such operators are the following:
	\begin{itemize}
		\item quasinormal operators: $T$ commutes with $T^*T$, i.e., $TT^*T=T^*T^2$;
		\item subnormal operators: there exist a Hilbert space $\K$, $\K\supseteq \H$, and a normal operator $N\in \B(\K)$ such that 
		$$
		N=\begin{bmatrix} T&\ast\\0&\ast\end{bmatrix}: \begin{pmatrix}
			\mathcal{H}\\
			\mathcal{H}^\perp
		\end{pmatrix}\to \begin{pmatrix}
			\mathcal{H}\\
			\mathcal{H}^\perp
		\end{pmatrix};
		$$
		\item hyponormal operators: if $TT^*\leq T^*T$.
	\end{itemize}
	It is well known that 
	\begin{equation*}
		\text{normal}\,\Rightarrow\,\text{quasinormal}\,\Rightarrow\,\text{subnormal}\,\Rightarrow\,\text{hyponormal}.
	\end{equation*}
	For other generalizations of normal operators, see , for example, \cite{Furuta01}.
	
	A normal operator $N\in\BK$ is the minimal extension of a subnormal $T\in\B(\H)$ if $\H\subseteq \K$ and if  $\K'\subseteq \K$ is a reducing subspace of $N$ and $\H\subseteq \K'$, then $\K'=\K$. 
	Every subnormal operator has a minimal normal extension
	$N$, and $N$ is unique up to unitary equivalence (see \cite[Corollary 2.7]{Conway91}).

	\bigskip 
	
	The previous notions have been also transfered to a multivariable operator theory setting. First, for $\mathbf{T}=(T_1,\ldots,T_d)\in\mathfrak{B}(\mathcal{H})^d$, by $\mathbf{T^*}$ we denote the operator $d$-tuple $\mathbf{T^*}=(T_1^*,\ldots,T_d^*)\in\mathfrak{B}(\mathcal{H})^d$. For $S,T\in\mathfrak{B}(\mathcal{H})$ let $[S,T]=ST-TS$. We say that an $d$-tuple $\mathbf{T}=(T_1,\ldots,T_d)\in\mathfrak{B}(\mathcal{H})^d$ of operators on $\mathcal{H}$ is jointly \textit{hyponormal} if the operator matrix
	\begin{equation*}
		[\mathbf{T^*},\mathbf{T}]:=\begin{bmatrix}
			[T_1^*,T_1]&[T_2^*,T_1]&\cdots&[T_d^*,T_1]\\
			[T_1^*,T_2]&[T_2^*,T_2]&\cdots&[T_d^*,T_2]\\
			\vdots&\vdots&\ddots&\vdots\\
			[T_1^*,T_d]&[T_2^*,T_d]&\cdots&[T_d^*,T_d]
		\end{bmatrix}
	\end{equation*}
	is positive on the direct sum of $n$ copies of $\mathcal{H}$ (cf. \cite{Athavale88}, \cite{Curto90}, \cite{CurtoMuhlyXia88}). A $d$-tuple $\mathbf{T}$ is said to be \textit{normal} if $\mathbf{T}$ is commuting and each $T_k$ is normal, $k\in\{1,\ldots, d\}$,  and \textit{subnormal} if there exist a Hilbert space $\K$, $\K\supseteq \H$, and a normal $d$-tuple $\mathbf{N}=(N_1,\ldots,N_d)\in \B(\K)^d$ such that 
	$$
	N_k=\begin{bmatrix} T_k&\ast\\0&\ast\end{bmatrix}: \begin{pmatrix}
		\mathcal{H}\\
		\mathcal{H}^\perp
	\end{pmatrix}\to \begin{pmatrix}
		\mathcal{H}\\
		\mathcal{H}^\perp
	\end{pmatrix}, \quad k\in\{1,\ldots,d\}.
	$$
	A $d$-tuple $\mathbf{N}\in\B(\K)^d$ is called the minimal normal extension for a subnormal tuple $\mathbf{T}$ if there is no proper closed subspace of $\mathcal{K}$, containing $\mathcal{H}$ and which is reducing for each $N_k$, $k\in\{1,\ldots,d\}$. \par 
	An operator $d$-tuple $\mathbf{T}=(T_1,\ldots,T_d)\in\mathfrak{B}(\mathcal{H})^d$ is said to be \emph{pure} if there is no non-trivial subspace $\mathcal{M}\subseteq\mathcal{H}$, which is a common reducing subspace for $T_1,\ldots,T_d$ and for which operators $P^{cr}T_1\restriction_{\mathcal{M}},\ldots, P^{cr}T_d\restriction_{\mathcal{M}}$ form a normal $d$-tuple on $\mathcal{M}$.

	The notion of quasinormality has  several multivariable analogues. An operator $d$-tuple $\mathbf{T}=(T_1,\ldots,T_d)\in\mathfrak{B}(\mathcal{H})^d$ is called \textit{matricially quasinormal} if each $T_i$ commutes with each $T_j^*T_k$, $i,j,k\in\{1,\ldots,d\}$, $\mathbf{T}$ is  \textit{jointly quasinormal} if each $T_i$ commutes with each $T_j^*T_j$, $i,j\in\{1,\ldots,d\}$, and \textit{spherically quasinormal} if each $T_i$ commutes with $\sum_{j=1}^{n}T_j^*T_j$, $i\in\{1,\ldots,d\}$ (see \cite{Gleason06}). As shown in \cite{AthavalePodder15} and \cite{Gleason06}, we have
	
	\begin{align*}
		\text{normal}\,\Rightarrow\,&\text{matricially quasinormal}\,\Rightarrow\, \text{jointly quasinormal}\,\\\Rightarrow\,&\text{spherically quasinormal}\,\Rightarrow\,\text{subnormal}.
	\end{align*}
	For more details on these classes and the relations between them, see \cite{CurtoLeeYoon05, CurtoLeeYoon07, CurtoYoon06}.
	\bigskip

	Let $d\in\mathbb{N}$.	For $T_1,\ldots, T_d\in\mathfrak{B}(\mathcal{H})$, consider a $d$-tuple $\mathbf{T}=\begin{pmatrix}
		T_1\\\vdots\\T_d
	\end{pmatrix}$ as an operator from $\mathcal{H}$ into $\mathcal{H}\oplus\cdots\oplus\mathcal{H}$, that is,
	\begin{equation*}
		\mathbf{T}=\begin{pmatrix}
			T_1\\\vdots\\T_d
		\end{pmatrix}:\,\mathcal{H}\to \begin{array}{c}
			\mathcal{H}\\\oplus\\\vdots\\\oplus\\\mathcal{H}
		\end{array}.
	\end{equation*}
	
	We have that the kernel and range of $\mathbf{T}$ are, respectively,
	\begin{equation*}
		\mathcal{N}(\mathbf{T})=\bigcap_{k=1}^d\mathcal{N}(T_k),
	\end{equation*}
	
	and
	\begin{equation*}
		\mathcal{R}(\mathbf{T})=\left\{\begin{pmatrix}
			T_1x\\\vdots\\T_dx
		\end{pmatrix}:\, x\in\mathcal{H}\right\}.
	\end{equation*}
	
	We define (canonical) spherical polar decomposition of $\mathbf{T}$ (cf. \cite{CurtoYoon16}, \cite{CurtoYoon18}, \cite{KimYoon17}) as
	
	\begin{equation*}
		\mathbf{T}=\begin{pmatrix}
			T_1\\\vdots\\T_d
		\end{pmatrix}=\begin{pmatrix}
			V_1\\\vdots\\V_d
		\end{pmatrix}P=\begin{pmatrix}
			V_1P\\\vdots\\V_dP
		\end{pmatrix}=\mathbf{V}P,
	\end{equation*}
	where $P=\sqrt{T_1^*T_1+\cdots+T_d^*T_d}$ is a positive operator on $\mathcal{H}$, and \begin{equation*}
		\mathbf{V}=\begin{pmatrix}
			V_1\\\vdots\\V_d
		\end{pmatrix}:\,\mathcal{H}\to \begin{array}{c}
			\mathcal{H}\\\oplus\\\vdots\\\oplus\\\mathcal{H}
		\end{array},
	\end{equation*}
	is a spherical partial isometry from $\mathcal{H}$ into $\mathcal{H}\oplus\ldots\oplus\mathcal{H}$. In other words, $V_1^*V_1+\cdots+V_d^*V_d$ is the (orthogonal) projection onto the initial space of the partial isometry $\mathbf{V}$ which is 
	\begin{align*}
		\mathcal{N}(\mathbf{T})^\perp&=\left(\bigcap_{i=k}^d\mathcal{N}(T_k)\right)^\perp=\mathcal{N}(P)^\perp=\left(\bigcap_{i=k}^d\mathcal{N}(V_k)\right)^\perp.
	\end{align*}

	Operator $d$-tuple $\mathbf{T}\in\mathfrak{B}(\mathcal{H})^d$ is said to be \emph{Taylor invertible} if its associated Koszul complex $\mathcal{K}(\mathbf{T},\mathcal{H})$ is exact. For $d=2$, the Koszul complex $\mathcal{K}(\mathbf{T},\mathcal{H})$ associated to $\mathbf{T}=(T_1,T_2)$ on $\mathcal{H}$ is given by:
	\begin{equation*}
		\mathcal{K}(\mathbf{T},\mathcal{H}):\quad 0\longrightarrow\mathcal{H}\stackrel{\mathbf{T}}{\longrightarrow}\mathcal{H}\oplus\mathcal{H}\stackrel{(-T_2 \,\,T_1)}{\longrightarrow}\mathcal{H}\longrightarrow0,
	\end{equation*}
	where $\mathbf{T}=\begin{pmatrix}
		T_1\\T_2
	\end{pmatrix}$. We also say that $\mathbf{T}\in \mathfrak{B}(\mathcal{H})^d$ is \textit{left Taylor invertible} if $\mathcal{N}(\mathbf{T})=\{0_\mathcal{H}\}$ and $\mathcal{R}(\mathbf{T})$ is a closed subspace of $\mathcal{H}^d$.
	For more information on the Taylor invertibility and Koszul complexes, we refer a reader to \cite{BenhidaZerouali07, BenhidaZerouali11, KimYoon19, Taylor70a, Taylor70b}.\par 
	The Taylor spectrum   of a commuting $d$-tuple $\mathbf{T}=(T_1,\ldots,T_d) \in \mathfrak B(\mathcal H)^d$ is denoted by $\sigma_T(\mathbf T)$ and it is defined as
	\begin{equation*}
		\sigma_T(\mathbf{T})=\left\{(\lambda_1,\ldots,\lambda_d)\in\mathbb{C}^d:\,  \mathcal{K}((T_1-\lambda_1,\ldots, T_d-\lambda_d),\mathcal{H})\,\text{is not exact}    \right\}.
	\end{equation*}

	Finally, we introduce several notations which will be frequently used throughout the paper. For $\mathbf{T}=(T_1,\ldots, T_m)\in\mathfrak{B}(\mathcal{H})^m$ and $\mathbf{S}=(S_1,\ldots, S_n)\in\mathfrak{B}(\mathcal{H})^n$, we write
	\begin{equation}\label{product_def}
		\mathbf{T}\circ\mathbf{S}:=(T_1S_1,\ldots,T_1S_n,\ldots, T_mS_1,\ldots, T_mS_n).
	\end{equation}
	If $m=n$, we also define
	\begin{equation*}
		\mathbf{T}\mathbf{S}:=(T_1S_1,\ldots,T_mS_m).
	\end{equation*}
	Related to this definitions, we introduce the following notations:
	\begin{equation*}
		\mathbf{T}^1=\mathbf{T}\quad \text{ and }\quad \mathbf{T}^{n+1}=\mathbf{T}\circ\mathbf{T}^n,
	\end{equation*}
	as well as
	\begin{equation*}
		\mathbf{T}^{(1)}=\mathbf{T}\quad \text{ and }\quad \mathbf{T}^{(n+1)}=\mathbf{T}\mathbf{T}^{(n)},
	\end{equation*}
	where $n\in\mathbb{N}$. Also, for brevity, for $S\in\mathfrak{B}(\mathcal{H})$ and $\mathbf{T}=(T_1,\ldots,T_d)\in\mathfrak{B}(\mathcal{H})^d$, we write $S\mathbf{T}$ and $\mathbf{T}S$ to denote $\mathbf{S}\mathbf{T}$ and $\mathbf{T}\mathbf{S}$, respectively, where $\mathbf{S}=(S,\ldots,S)\in\mathfrak{B}(\mathcal{H})^d$.\par 
	Now, for a given $\mathbf{T}=(T_1,\ldots, T_d)\in\mathfrak{B}(\mathcal{H})^d$, we define the operator $\Theta_\mathbf{T}:\mathfrak{B}(\mathcal{H})\longrightarrow \mathfrak{B}(\mathcal{H})$ as
	\begin{equation*}
		X\longmapsto \Theta_\mathbf{T}(X):=\sum_{k=1}^d T_k^* XT_k.
	\end{equation*}
	We define inductively $\Theta_\mathbf{T}^0(X)=X$ and $\Theta_\mathbf{T}^n(X):=\Theta_\mathbf{T}[\Theta_\mathbf{T}^{n-1}(X)]$, for all $n\in\mathbb{N}$. This operator will play a crucial role in our study.
	
	\bigskip
	The paper is organized as follows. In Section \ref{power_root_sec}, we characterize a spherically quasinormal tuple $\mathbf{T}$ in terms of its normal extension, as well as in terms of powers of operator $\Theta_{\mathbf{T}}(I)$. Using the obtained characterization, we show that the powers of spherically quasinormal tuples are also spherically quasinormal. We also demonstrate that the subnormal $n$-roots of spherically quasinormal tuples must also be spherically quasinormal, thereby providing an affirmative answer to a multivariable version of \cite[Problem 2.1]{CurtoLeeYoon20}.	In Section \ref{mne_dual_rel_sec}, we explore the relations between a spherically quasinormal tuple $\mathbf{T}$ and its minimal normal extension $\mathbf{N}$. We show that $\mathbf{T}$ "inherits" the spherical polar decomposition from $\mathbf{N}$. We also consider some spectral properties of $\mathbf{N}$, and their connections with the diagonal blocks of $\mathbf{N}$.
	
	\section{Powers and roots of spherically quasinormal tuples}\label{power_root_sec}

	We begin this section by providing the characterization of spherically quasinormal tuples, which would represent the foundation for our future study. It also has a significance on its own, as it demonstrates that  the most characterizations of the quasinormal operators in one dimensional case (see \cite{Brown53, Campbell75, ConwayOlin77, Embry73, JablonskiJungStochel14}) also hold in a multivariable operator theory setting.
	
	\begin{theorem}\label{charact}
		Let $\mathbf{T} = (T_1,\ldots,T_d)\in\mathfrak{B}(\mathcal{H})^d$ be a commuting $d$-tuple. Then the following conditions are equivalent:
		\begin{enumerate}[$(i)$]
			\item $\mathbf{T}$ is spherically quasinormal;
			\item $\Theta_{\mathbf{T}}^n(I)=\left[\Theta_{\mathbf{T}}(I)\right]^n$, for all $n\in\mathbb{N}$;
			\item there exists a spectral measure $E$ on $\mathbb{R}_+$ such that
			\begin{equation}\label{measure}
				\Theta_{\mathbf{T}}^n(I)=\int_{\mathbb{R}_+}x^n\, E(dx),\quad \text{ for all }\,n\in\mathbb{N};
			\end{equation}
			\item $\Theta_{\mathbf{T}}^n(I)=\left[\Theta_{\mathbf{T}}(I)\right]^n$, for $n\in\{2,3\}$.
			\item $\mathbf{T}$ is subnormal and $\mathcal{H}$ is invariant for $\Theta_{\mathbf{N}}(I)$, where $\mathbf{N}$ is any normal extension of $\mathbf{T}$;
			\item $\mathbf{V}P=P\mathbf{V}$, where $\mathbf{T}=\mathbf{V}P$ is the spherical polar decomposition of $\mathbf{T}$.
		\end{enumerate}
	\end{theorem}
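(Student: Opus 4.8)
The plan is to establish the cycle of implications $(i)\Rightarrow(ii)\Rightarrow(iii)\Rightarrow(iv)\Rightarrow(i)$, and then separately handle $(v)$ and $(vi)$ by tying them to the chain. The central computational object is $\Theta_{\mathbf T}(I)=\sum_{k=1}^d T_k^*T_k =: Q$, a positive operator; note that $\mathbf T$ is spherically quasinormal precisely when $T_iQ=QT_i$ for every $i$. The key algebraic observation, which I would isolate first as a lemma-style computation, is that if $Q$ commutes with each $T_k$ (hence with each $T_k^*$, since $Q$ is self-adjoint and commutation with a self-adjoint operator passes to the adjoint — or more directly, from $T_iQ=QT_i$ one gets $QT_i^*=T_i^*Q$ by taking adjoints), then
\[
\Theta_{\mathbf T}^{2}(I)=\sum_{j,k}T_j^*T_k^*T_kT_j=\sum_j T_j^*QT_j=\sum_j T_j^* T_j Q = Q^2,
\]
and inductively $\Theta_{\mathbf T}^{n}(I)=Q^{n}$. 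This gives $(i)\Rightarrow(ii)$. For $(ii)\Rightarrow(iii)$: since $Q$ is a bounded positive operator, it has a spectral decomposition $Q=\int_{\mathbb R_+}x\,E(dx)$, whence $Q^n=\int_{\mathbb R_+}x^n\,E(dx)$, and $(ii)$ identifies the left side with $\Theta_{\mathbf T}^n(I)$. The implication $(iii)\Rightarrow(iv)$ is immediate (specialize to $n=2,3$), so the real work is $(iv)\Rightarrow(i)$.

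For $(iv)\Rightarrow(i)$ I expect the main obstacle. One has the two polynomial-moment identities $\Theta_{\mathbf T}^2(I)=Q^2$ and $\Theta_{\mathbf T}^3(I)=Q^3$, and must deduce $[T_i,Q]=0$. The strategy mirrors the classical one-variable argument (Embry/Conway–Olin style): consider $D:=\sum_i [Q,T_i]^*[Q,T_i]=\sum_i (T_i^*Q-QT_i^*)(QT_i-T_iQ)$ and expand it. Expanding, $D=\sum_i\big(T_i^*Q^2T_i - T_i^*QT_iQ - QT_i^*QT_i + QT_i^*T_iQ\big)$. The first term sums to $\Theta_{\mathbf T}(Q^2)$; by $(iv)$ with $n=3$, $\Theta_{\mathbf T}(Q^2)=\Theta_{\mathbf T}(\Theta_{\mathbf T}^2(I))=\Theta_{\mathbf T}^3(I)=Q^3$. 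The fourth term sums to $Q\Theta_{\mathbf T}(I)Q=Q\cdot Q\cdot Q=Q^3$. For the cross terms $\sum_i T_i^*QT_iQ=\Theta_{\mathbf T}(Q)Q=Q^2\cdot Q=Q^3$ using $(iv)$ with $n=2$ (i.e. $\Theta_{\mathbf T}(Q)=\Theta_{\mathbf T}(\Theta_{\mathbf T}(I))=\Theta_{\mathbf T}^2(I)=Q^2$); similarly the third term $\sum_i QT_i^*QT_i=Q\Theta_{\mathbf T}(Q)=Q\cdot Q^2=Q^3$. Therefore $D=Q^3-Q^3-Q^3+Q^3=0$. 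Since $D$ is a sum of operators of the form $X^*X\ge 0$, each summand vanishes, so $[Q,T_i]=0$ for all $i$, which is exactly spherical quasinormality. (The only subtlety is keeping straight that $\Theta_{\mathbf T}^2(I)=Q^2$ and $\Theta_{\mathbf T}^3(I)=Q^3$ are the hypotheses being used, not conclusions — this is why $n=2$ and $n=3$ both appear.)

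Finally, for $(v)$ and $(vi)$: to get $(i)\Rightarrow(v)$, recall every spherically quasinormal tuple is subnormal (stated in the introduction), take a normal extension $\mathbf N$ on $\mathcal K\supseteq\mathcal H$, and show $\mathcal H$ is invariant for $\Theta_{\mathbf N}(I)=\sum N_k^*N_k$; writing $N_k=\begin{bmatrix}T_k&\ast\\0&\ast\end{bmatrix}$ one checks $P_{\mathcal H}\Theta_{\mathbf N}(I)P_{\mathcal H}=Q$ and, using $N_k^*N_k=N_kN_k^*$ together with the operator identity $\Theta_{\mathbf T}(I)=Q=\Theta_{\mathbf N}(I)|_{\mathcal H}$ that follows from subnormality and the moment condition $(ii)$, that the $(2,1)$ block of $\Theta_{\mathbf N}(I)$ with respect to $\mathcal H\oplus\mathcal H^\perp$ vanishes; conversely $(v)\Rightarrow(ii)$ because invariance of $\mathcal H$ for $\Theta_{\mathbf N}(I)$ lets one compute $\Theta_{\mathbf T}^n(I)=P_{\mathcal H}\Theta_{\mathbf N}^n(I)|_{\mathcal H}=P_{\mathcal H}(\Theta_{\mathbf N}(I))^n|_{\mathcal H}=(P_{\mathcal H}\Theta_{\mathbf N}(I)|_{\mathcal H})^n=Q^n$, using normality of $\mathbf N$ so that $\Theta_{\mathbf N}^n(I)=(\Theta_{\mathbf N}(I))^n$ and the invariance to push the projection through. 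For $(vi)$: from the spherical polar decomposition $\mathbf T=\mathbf V P$ with $P=Q^{1/2}$, spherical quasinormality $T_iQ=QT_i$ gives $T_iP=PT_i$ (as $P$ is a function of $Q$), and then on $\mathcal N(P)^\perp=\overline{\mathcal R(P)}$ one has $V_iP^2 = PT_iP^{-1}\cdot\ldots$ — more cleanly, $V_i P\cdot P = V_i P^2 = T_i P = P T_i = P V_i P$, and since $P$ has dense range this forces $V_iP=PV_i$ on that subspace, while on $\mathcal N(P)$ both sides vanish; conversely if $\mathbf V P=P\mathbf V$ then $T_iQ = V_iP\cdot P^2 = V_i P^3 = P^2 V_i P = Q T_i$. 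The one point needing care throughout is the passage between "commutes with $Q$" and "commutes with $P=\sqrt Q$", which is justified by the functional calculus, and the handling of $\mathcal N(P)$ versus its orthocomplement for the partial isometry $\mathbf V$.
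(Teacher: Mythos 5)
Your cycle $(i)\Rightarrow(ii)\Rightarrow(iii)\Rightarrow(iv)\Rightarrow(i)$ is essentially the paper's own argument: the same induction for $(i)\Rightarrow(ii)$, the spectral measure of $\Theta_{\mathbf{T}}(I)$ for the equivalence with $(iii)$, and for $(iv)\Rightarrow(i)$ the paper expands $N(x)=\sum_k\norm{(\Theta_{\mathbf{T}}(I)T_k-T_k\Theta_{\mathbf{T}}(I))x}^2$ and shows it vanishes, which is exactly your operator identity $D=\sum_i[Q,T_i]^*[Q,T_i]=0$ read against a quadratic form; your four terms $Q^3-Q^3-Q^3+Q^3$ match the paper's term by term. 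Where you diverge is in $(v)$ and $(vi)$: the paper disposes of these by citing \cite[Lemma 2.5]{Stankovic23a} and \cite[Lemma 2.1]{CurtoYoon19}, whereas you attempt direct proofs. Your treatment of $(vi)$ is complete and correct: $V_iP^2=T_iP=PT_i=PV_iP$ plus density of $\mathcal{R}(P)$ in $\mathcal{N}(P)^\perp$, together with the vanishing of both $V_iP$ and $PV_i$ on $\mathcal{N}(P)$, and the converse computation $T_iQ=V_iP^3=P^2V_iP=QT_i$.

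The one genuine gap is in $(i)\Rightarrow(v)$. You assert that normality $N_k^*N_k=N_kN_k^*$ together with the identification of the $(1,1)$ block of $\Theta_{\mathbf{N}}(I)$ with $Q$ forces the $(2,1)$ block to vanish, but neither ingredient delivers this: blockwise, normality of $N_k=\begin{bmatrix}T_k&A_k\\0&S_k^*\end{bmatrix}$ only yields $T_k^*T_k=T_kT_k^*+A_kA_k^*$ and $T_k^*A_k=A_kS_k$, while the off-diagonal block in question is $B=\sum_kT_k^*A_k$, which these relations do not kill. This is precisely the step the paper outsources to \cite[Theorem 2.8]{CurtoLeeYoon20} in Remark \ref{remark_theta_N}. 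A self-contained fix: since $\mathcal{H}$ is invariant for each $N_k$, the $(1,1)$ block of $\Theta_{\mathbf{N}}^2(I)$ equals $\Theta_{\mathbf{T}}^2(I)$; on the other hand $\Theta_{\mathbf{N}}^2(I)=[\Theta_{\mathbf{N}}(I)]^2$ by normality of $\mathbf{N}$, and the $(1,1)$ block of $[\Theta_{\mathbf{N}}(I)]^2$ is $Q^2+BB^*$. Spherical quasinormality of $\mathbf{T}$ gives $\Theta_{\mathbf{T}}^2(I)=Q^2$, hence $BB^*=0$ and $B=0$. With that inserted, your $(v)\Rightarrow(ii)$ direction goes through as written.
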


	\begin{proof}
		$(i)\Rightarrow(ii)$: Suppose that $\mathbf{T}$ is spherically quasinormal and let us show that  
		\begin{equation}\label{power_switch}
			\Theta_{\mathbf{T}}^n(I)=\left[\Theta_{\mathbf{T}}(I)\right]^n
		\end{equation} 
		holds for all $n\in\mathbb{N}$, by using the induction principle. Clearly, the statement is true for $n=1$. Assume that it is also true for some $n\in\mathbb{N}$. Then, using the fact that $T_k$ commutes with $\Theta_{\mathbf{T}}(I)$, for all $k\in\{1,\ldots,d\}$, we have
		\begin{align*}
			\Theta_{\mathbf{T}}^{n+1}(I)&=\Theta_{\mathbf{T}}(\Theta_{\mathbf{T}}^n(I))=\Theta_{\mathbf{T}}(\left[\Theta_{\mathbf{T}}(I)\right]^n)\\
			&=\sum_{k=1}^{d}T_k^*\left[\Theta_{\mathbf{T}}(I)\right]^nT_k=\left(\sum_{k=1}^{d}T_k^*T_k\right)\left[\Theta_{\mathbf{T}}(I)\right]^n\\
			&=\Theta_{\mathbf{T}}(I)\left[\Theta_{\mathbf{T}}(I)\right]^n=\left[\Theta_{\mathbf{T}}(I)\right]^{n+1}
		\end{align*}
		Hence, \eqref{power_switch} holds for all $n\in\mathbb{N}$.\par 
		$(ii)\Rightarrow(iii)$: Let $E$ be the spectral measure of positive operator $\Theta_{\mathbf{T}}(I)$. Then, for each $n\in\mathbb{N}$,
		\begin{equation*}
			\Theta_{\mathbf{T}}^n(I)=\left[\Theta_{\mathbf{T}}(I)\right]^n=\int_{\mathbb{R}_+}x^n\, E(dx).
		\end{equation*}
		$(iii)\Rightarrow(ii)$: If $E$ is a spectral measure of some positive operator $P\in\mathfrak{B}(\mathcal{H})$ such that \eqref{measure} is satisfied, then $\Theta_{\mathbf{T}}^n(I)=P^n$ for all $n\in\mathbb{N}$. Taking $n=1$ yields $P=\Theta_{\mathbf{T}}(I)$, from where $(ii)$ directly follows.\par 
		$(ii)\Rightarrow(iv)$: This is obvious.\par 
		$(iv)\Rightarrow(i)$: Assume that $\Theta_{\mathbf{T}}^2(I)=\left[\Theta_{\mathbf{T}}(I)\right]^2$ and $\Theta_{\mathbf{T}}^3(I)=\left[\Theta_{\mathbf{T}}(I)\right]^3$. Then, we have 
		\begin{align*}
			\left[\Theta_{\mathbf{T}}(I)\right]^3&=\Theta_{\mathbf{T}}^3(I)=\Theta_{\mathbf{T}}(\Theta_{\mathbf{T}}^2(I))=\Theta_{\mathbf{T}}(\left[\Theta_{\mathbf{T}}(I)\right]^2),\\
			\left[\Theta_{\mathbf{T}}(I)\right]^3&=\Theta_{\mathbf{T}}(I)\left[\Theta_{\mathbf{T}}(I)\right]^2=\Theta_{\mathbf{T}}(I)\Theta_{\mathbf{T}}^2(I),\\
			\left[\Theta_{\mathbf{T}}(I)\right]^3&=\left[\Theta_{\mathbf{T}}(I)\right]^2\Theta_{\mathbf{T}}(I)=\Theta_{\mathbf{T}}^2(I)\Theta_{\mathbf{T}}(I).
		\end{align*}

		Now let $x\in\mathcal{H}$ be arbitrary and set $N(x):=\sum_{k=1}^d\norm{\left(\Theta_{\mathbf{T}}(I)T_k-T_k\Theta_{\mathbf{T}}(I)\right)x}^2$. Then
		\begin{align*}
			N(x)&=\sum_{k=1}^d\scal{\left(\Theta_{\mathbf{T}}(I)T_k-T_k\Theta_{\mathbf{T}}(I)\right)x}{\left(\Theta_{\mathbf{T}}(I)T_k-T_k\Theta_{\mathbf{T}}(I)\right)x}\\
			&=\sum_{k=1}^d\scal{\left(T_k^*\Theta_{\mathbf{T}}(I)-\Theta_{\mathbf{T}}(I)T_k^*\right)\left(\Theta_{\mathbf{T}}(I)T_k-T_k\Theta_{\mathbf{T}}(I)\right)x}{x}\\
			&=\sum_{k=1}^d\scal{T_k^*\left[\Theta_{\mathbf{T}}(I)\right]^2T_kx}{x}-\sum_{k=1}^d\scal{T_k^*\Theta_{\mathbf{T}}(I)T_k\Theta_{\mathbf{T}}(I)x}{x}\\
			&\,\quad -\sum_{k=1}^d\scal{\Theta_{\mathbf{T}}(I)T_k^*\Theta_{\mathbf{T}}(I)T_kx}{x}+\sum_{k=1}^d\scal{\Theta_{\mathbf{T}}(I)T_k^*T_k\Theta_{\mathbf{T}}(I)x}{x}\\
			&=\scal{\sum_{k=1}^dT_k^*\left[\Theta_{\mathbf{T}}(I)\right]^2T_kx}{x}-\scal{\left(\sum_{k=1}^dT_k^*\Theta_{\mathbf{T}}(I)T_k\right)\Theta_{\mathbf{T}}(I)x}{x}\\
			&\,\quad -\scal{\Theta_{\mathbf{T}}(I)\left(\sum_{k=1}^dT_k^*\Theta_{\mathbf{T}}(I)T_k\right)x}{x}+\scal{\Theta_{\mathbf{T}}(I)\left(\sum_{k=1}^dT_k^*T_k\right)\Theta_{\mathbf{T}}(I)x}{x}\\
			&=\scal{\Theta_{\mathbf{T}}(\left[\Theta_{\mathbf{T}}(I)\right]^2)x}{x}-\scal{\Theta_{\mathbf{T}}^2(I)\Theta_{\mathbf{T}}(I)x}{x}-\scal{\Theta_{\mathbf{T}}(I)\Theta_{\mathbf{T}}^2(I)x}{x}+\scal{\left[\Theta_{\mathbf{T}}(I)\right]^3x}{x}\\
			&=0.
		\end{align*}
		Therefore, $N(x)=0$ for each $x\in\mathcal{H}$, from where it follows that $$\norm{ \Theta_{\mathbf{T}}(I)T_k-T_k\Theta_{\mathbf{T}}(I) }=0$$ for each $k\in\{1,\ldots,d\}$. Hence, $T_k$ commutes with $\Theta_{\mathbf{T}}(I)$ for each $k\in\{1,\ldots,d\}$, i.e., $\mathbf{T}$ is spherically quasinormal, as desired.\par 
		$(i)\Leftrightarrow(v)$: This follows from \cite[Lemma 2.5]{Stankovic23a}.\par 
		$(i)\Leftrightarrow(vi)$: This was proved in \cite[Lemma 2.1]{CurtoYoon19}.
	\end{proof}

	\begin{remark}
		Another characterizations of the spherically quasinormal tuples in terms of the operator transforms can be found in \cite{CurtoYoon19}, \cite{Stankovic24} and \cite{Stankovic24a}.
	\end{remark}

	\begin{remark}\label{remark_theta_N}
		Let $\mathbf{T}=(T_1,\ldots,T_d)\in\mathfrak{B}(\mathcal{H})^d$ be a  spherically quasinormal operator tuple and let  $\mathbf{N}=(N_1,\ldots,N_d)\in\mathfrak{B}(\mathcal{K})^d$, $\mathcal{K}\supseteq\mathcal{H}$, be a normal extension $\mathbf{T}$ given by
		\begin{equation}\label{dual}
			N_k=\begin{bmatrix}
				T_k&A_k\\
				0&S_k^*
			\end{bmatrix}:\,\begin{pmatrix}
				\mathcal{H}\\\mathcal{H}^\perp
			\end{pmatrix}\to \begin{pmatrix}
				\mathcal{H}\\\mathcal{H}^\perp
			\end{pmatrix},\quad k\in\{1,\ldots,d\},
		\end{equation}
		for some $\mathbf{S}=(S_1,\ldots,S_d)\in\mathfrak{B}(\mathcal{H^\perp})^d$, where $\H^\perp=\K\ominus\H$.
		Then
		\begin{equation}\label{theta_N_representation}
			\Theta_{\mathbf{N}}(I)=\begin{bmatrix}
				\Theta_{\mathbf{T}}(I)&0\\
				0&\Theta_{\mathbf{S}}(I)
			\end{bmatrix}:\,\begin{pmatrix}
				\mathcal{H}\\\mathcal{H}^\perp
			\end{pmatrix}\to \begin{pmatrix}
				\mathcal{H}\\\mathcal{H}^\perp
			\end{pmatrix}.
		\end{equation} 
		Indeed, using the normality of $\mathbf{T}$, by direct computation, we have
		\begin{align*}
			\Theta_{\mathbf{N}}(I)&=\sum_{k=1}^dN_k^*N_k=\sum_{k=1}^d\begin{bmatrix}
				T_k^*T_k&T_k^*A_k\\
				A_k^*T_k&A_k^*A_k+S_kS_k^*
			\end{bmatrix}\\
			&=\sum_{k=1}^d\begin{bmatrix}
				T_k^*T_k&T_k^*A_k\\
				A_k^*T_k&S_k^*S_k
			\end{bmatrix}\\
			&=\begin{bmatrix}
				\Theta_{\mathbf{T}}(I)&\sum_{k=1}^dT_k^*A_k\\
				\sum_{k=1}^dA_k^*T_k&\Theta_{\mathbf{S}}(I)
			\end{bmatrix}.
		\end{align*}
		Now \cite[Theorem 2.8]{CurtoLeeYoon20} yields the desired conclusion.
	\end{remark}
	
	Our next goal is to demonstrate that the powers of spherically quasinormal tuples maintain their spherically quasinormal nature. To achieve this, we require certain auxiliary findings. While the following results are probably well-known and easy to verify, we have found no explicit mention of them elsewhere. Hence, for the sake of completeness, we provide them along with their proofs.
	
	\begin{lemma}\label{theta_switch_2}
		Let $\mathbf{T}=(T_1,\ldots, T_m)\in\mathfrak{B}(\mathcal{H})^m$ and $\mathbf{S}=(S_1,\ldots, S_n)\in\mathfrak{B}(\mathcal{H})^n$. Then
		\begin{equation*}
			\Theta_{\mathbf{T}\circ\mathbf{S}}=\Theta_{\mathbf{S}}\Theta_{\mathbf{T}}.
		\end{equation*}
	\end{lemma}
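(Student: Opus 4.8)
The plan is simply to evaluate both operators on an arbitrary $X\in\mathfrak{B}(\mathcal{H})$ straight from the definition of $\Theta$, since the statement is really a bookkeeping identity about finite double sums. First I would unwind the definition \eqref{product_def}: the tuple $\mathbf{T}\circ\mathbf{S}$ is the $mn$-tuple whose entries are exactly the products $T_iS_j$ as $(i,j)$ ranges over $\{1,\ldots,m\}\times\{1,\ldots,n\}$, each pair occurring once. Hence
\begin{equation*}
	\Theta_{\mathbf{T}\circ\mathbf{S}}(X)=\sum_{i=1}^{m}\sum_{j=1}^{n}(T_iS_j)^{*}\,X\,(T_iS_j)=\sum_{i=1}^{m}\sum_{j=1}^{n}S_j^{*}\,T_i^{*}\,X\,T_i\,S_j .
\end{equation*}

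Next I would compute the right-hand side of the asserted identity by applying the two operators in turn. Using linearity of $\Theta_{\mathbf{S}}$ and the definition,
\begin{equation*}
	\Theta_{\mathbf{S}}\bigl(\Theta_{\mathbf{T}}(X)\bigr)=\Theta_{\mathbf{S}}\!\left(\sum_{i=1}^{m}T_i^{*}\,X\,T_i\right)=\sum_{j=1}^{n}S_j^{*}\left(\sum_{i=1}^{m}T_i^{*}\,X\,T_i\right)S_j=\sum_{j=1}^{n}\sum_{i=1}^{m}S_j^{*}\,T_i^{*}\,X\,T_i\,S_j .
\end{equation*}
The two expressions coincide after interchanging the (finite) order of summation, so $\Theta_{\mathbf{T}\circ\mathbf{S}}(X)=\Theta_{\mathbf{S}}\bigl(\Theta_{\mathbf{T}}(X)\bigr)$ for every $X$, which is the claim.

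I do not expect any genuine obstacle here: the only point needing care is the indexing convention, namely checking that the $mn$ summands defining $\Theta_{\mathbf{T}\circ\mathbf{S}}$ are precisely the terms $S_j^{*}T_i^{*}XT_iS_j$ over all pairs $(i,j)$, with neither repetition nor omission, so that the double sum matches the one coming from $\Theta_{\mathbf{S}}\Theta_{\mathbf{T}}$. Everything else is a one-line computation. (As a sanity check one may also note the special case $m=1$, $T_1=I$, which reduces to the trivial identity $\Theta_{\mathbf{S}}=\Theta_{\mathbf{S}}\Theta_{(I)}$.)
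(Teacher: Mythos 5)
Your proposal is correct and follows essentially the same route as the paper's proof: expand $\Theta_{\mathbf{T}\circ\mathbf{S}}(X)$ via \eqref{product_def} into the double sum $\sum_{i,j}S_j^{*}T_i^{*}XT_iS_j$ and recognize it, after interchanging the finite sums, as $\Theta_{\mathbf{S}}(\Theta_{\mathbf{T}}(X))$. No gaps; the indexing check you flag is indeed the only point of care.
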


	\begin{proof}
		Let $X\in\mathfrak{B}(\mathcal{H})$ be arbitrary. Using \eqref{product_def}, we have
		\begin{align*}
			\Theta_{\mathbf{T}\circ\mathbf{S}}(X)&=\sum_{j=1}^n\sum_{i=1}^m(T_iS_j)^*XT_iS_j=\sum_{j=1}^n\sum_{i=1}^mS_j^*T_i^*XT_iS_j\\
			&=\sum_{j=1}^nS_j^*\left(\sum_{i=1}^mT_i^*XT_i\right)S_j=\Theta_{\mathbf{S}}(\Theta_{\mathbf{T}}(X))\\
			&=\Theta_{\mathbf{S}}\Theta_{\mathbf{T}}(X),
		\end{align*}
		from where the conclusion follows.
	\end{proof}
	
	Directly from the previous lemma and the induction principle, we may also conclude that the following holds:
	\begin{corollary}\label{theta_switch_n}
		Let $n\in\mathbb{N}$ and $\mathbf{T}_1,\ldots,\mathbf{T}_n$ be arbitrary operator tuples (not necessarily of the same length). Then
		\begin{equation*}
			\Theta_{\mathbf{T}_1\circ\ldots\circ\mathbf{T}_n}=\Theta_{\mathbf{T}_n}\cdots\,\Theta_{\mathbf{T}_1}.
		\end{equation*}
	\end{corollary}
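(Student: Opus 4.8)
The plan is to argue by induction on $n$, with Lemma \ref{theta_switch_2} serving both as the base case and as the engine of the inductive step. For $n=1$ there is nothing to prove, and for $n=2$ the claim is precisely Lemma \ref{theta_switch_2}. So I would suppose the identity $\Theta_{\mathbf{T}_1\circ\ldots\circ\mathbf{T}_{n-1}}=\Theta_{\mathbf{T}_{n-1}}\cdots\Theta_{\mathbf{T}_1}$ to hold for some $n-1\geq 2$ and for every choice of tuples, and write $\mathbf{A}:=\mathbf{T}_1\circ\ldots\circ\mathbf{T}_{n-1}$, which is again a single operator tuple on $\mathcal{H}$.

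Before the step I would record the one point that needs a word of care, namely that the product $\circ$ from \eqref{product_def} is associative: if $\mathbf{T},\mathbf{S},\mathbf{U}$ have lengths $m,n,p$, then both $(\mathbf{T}\circ\mathbf{S})\circ\mathbf{U}$ and $\mathbf{T}\circ(\mathbf{S}\circ\mathbf{U})$ list the operators $T_iS_jU_k$ in the lexicographic order of the triple $(i,j,k)$, and hence coincide. In particular the iterated symbol $\mathbf{T}_1\circ\ldots\circ\mathbf{T}_n$ is unambiguous and equals $\mathbf{A}\circ\mathbf{T}_n$, so the left-hand side of the corollary is well defined and equal to $\Theta_{\mathbf{A}\circ\mathbf{T}_n}$.

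The inductive step is then immediate: applying Lemma \ref{theta_switch_2} to the pair $\mathbf{A},\mathbf{T}_n$ gives $\Theta_{\mathbf{A}\circ\mathbf{T}_n}=\Theta_{\mathbf{T}_n}\Theta_{\mathbf{A}}$, and substituting the induction hypothesis for $\Theta_{\mathbf{A}}$ yields $\Theta_{\mathbf{A}\circ\mathbf{T}_n}=\Theta_{\mathbf{T}_n}\Theta_{\mathbf{T}_{n-1}}\cdots\Theta_{\mathbf{T}_1}$, which is exactly the asserted formula for $n$; this closes the induction. There is no genuine obstacle in the argument — the only thing to keep honest is the bookkeeping above, which guarantees that $\mathbf{T}_1\circ\ldots\circ\mathbf{T}_n$ is compatible with the left-associated bracketing $(\cdots((\mathbf{T}_1\circ\mathbf{T}_2)\circ\mathbf{T}_3)\circ\cdots)\circ\mathbf{T}_n$ used in the step. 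Alternatively, one may simply adopt that left-to-right bracketing as the definition of the iterated product, in which case $\mathbf{T}_1\circ\ldots\circ\mathbf{T}_n=\mathbf{A}\circ\mathbf{T}_n$ holds by fiat and the proof reduces to Lemma \ref{theta_switch_2} plus the induction hypothesis, with no associativity check needed at all.
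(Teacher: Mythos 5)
Your proof is correct and follows exactly the route the paper intends: the paper derives this corollary ``directly from the previous lemma and the induction principle,'' which is precisely your induction on $n$ using Lemma \ref{theta_switch_2} for the inductive step. Your extra remark verifying associativity of $\circ$ (both bracketings list $T_iS_jU_k$ in lexicographic order of $(i,j,k)$) is a sound piece of bookkeeping that the paper leaves implicit.
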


	\begin{corollary}\label{theta_powers}
		Let $\mathbf{T} = (T_1,\ldots,T_d)\in\mathfrak{B}(\mathcal{H})^d$. Then
		\begin{equation*}
			\Theta_{\mathbf{T}^n}^k=\Theta_{\mathbf{T}}^{nk}
		\end{equation*}
		for all $n,k\in\mathbb{N}$.
	\end{corollary}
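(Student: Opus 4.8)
The statement to prove is Corollary \ref{theta_powers}: for $\mathbf{T} = (T_1,\ldots,T_d)\in\mathfrak{B}(\mathcal{H})^d$ and all $n,k\in\mathbb{N}$, we have $\Theta_{\mathbf{T}^n}^k=\Theta_{\mathbf{T}}^{nk}$. The plan is to derive this as an almost immediate consequence of Corollary \ref{theta_switch_n} together with the definition $\mathbf{T}^{n+1}=\mathbf{T}\circ\mathbf{T}^n$.

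\medskip
\noindent\textbf{Step 1: unfold the $n$-th power as an $n$-fold $\circ$-product.} From the inductive definition $\mathbf{T}^1=\mathbf{T}$, $\mathbf{T}^{n+1}=\mathbf{T}\circ\mathbf{T}^n$, a straightforward induction on $n$ shows that
\[
\mathbf{T}^n=\underbrace{\mathbf{T}\circ\mathbf{T}\circ\cdots\circ\mathbf{T}}_{n\text{ times}}.
\]
(One only needs associativity of $\circ$ as an operation on tuples, which is clear from \eqref{product_def}; strictly speaking the bracketing is fixed by the recursion, so no associativity statement is even required — $\mathbf{T}^n = \mathbf{T}\circ(\mathbf{T}\circ(\cdots\circ\mathbf{T}))$.)

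\medskip
\noindent\textbf{Step 2: apply Corollary \ref{theta_switch_n}.} Taking $\mathbf{T}_1=\cdots=\mathbf{T}_n=\mathbf{T}$ in Corollary \ref{theta_switch_n} gives
\[
\Theta_{\mathbf{T}^n}=\Theta_{\mathbf{T}\circ\cdots\circ\mathbf{T}}=\underbrace{\Theta_{\mathbf{T}}\cdots\Theta_{\mathbf{T}}}_{n\text{ times}}=\Theta_{\mathbf{T}}^{n}.
\]

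\medskip
\noindent\textbf{Step 3: raise to the $k$-th power.} Since $\Theta_{\mathbf{T}^n}$ and $\Theta_{\mathbf{T}}^n$ are equal as operators on $\mathfrak{B}(\mathcal{H})$, composing each with itself $k$ times yields
\[
\Theta_{\mathbf{T}^n}^k=\bigl(\Theta_{\mathbf{T}}^{n}\bigr)^k=\Theta_{\mathbf{T}}^{nk},
\]
which is the claim.

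\medskip
\noindent I do not anticipate a genuine obstacle here; the only point requiring a small amount of care is Step 1 — making sure that the recursive definition $\mathbf{T}^{n+1}=\mathbf{T}\circ\mathbf{T}^n$ really does produce the $n$-fold $\circ$-product in the order matching Corollary \ref{theta_switch_n} (so that the reversal of indices, $\Theta_{\mathbf{T}_1\circ\cdots\circ\mathbf{T}_n}=\Theta_{\mathbf{T}_n}\cdots\Theta_{\mathbf{T}_1}$, collapses correctly when all the tuples coincide). Since every factor is the same tuple $\mathbf{T}$, the reversal is harmless and the composition is just $\Theta_{\mathbf{T}}^n$. Everything else is formal.
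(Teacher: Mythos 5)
Your proof is correct and uses essentially the same approach as the paper: both arguments reduce the claim to Corollary \ref{theta_powers}'s prerequisite, Corollary \ref{theta_switch_n}, applied to the $\circ$-power of $\mathbf{T}$. The only (immaterial) difference is the order of operations — you establish $\Theta_{\mathbf{T}^n}=\Theta_{\mathbf{T}}^{n}$ first and then compose $k$ times, whereas the paper unfolds $\Theta_{\mathbf{T}^n}^k$ as $\Theta_{\mathbf{T}^{nk}}$ in one chain of equalities.
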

	\begin{proof}
		Using Corollary \ref{theta_switch_n}, we have that for all $n,k\in\mathbb{N}$,
		\begin{equation*}
			\Theta_{\mathbf{T}^n}^k=\underbrace{\Theta_{\mathbf{T}^n}\cdots\, \Theta_{\mathbf{T}^n}}_{k-\text{times}}=\Theta_{\scriptsize \underbrace{\mathbf{T}^n\circ\cdots\circ \mathbf{T}^n}_{k-\text{times}}}=\Theta_{\mathbf{T}^{nk}}=\underbrace{\Theta_{\mathbf{T}}\cdots\, \Theta_{\mathbf{T}}}_{nk-\text{times}}=\Theta_{\mathbf{T}}^{nk}.
		\end{equation*}
	\end{proof}

	We are now ready to prove the claim regarding powers of spherically quasinormal tuples.
	\begin{theorem}\label{sphquasipowers}
		Let $\mathbf{T} = (T_1,\ldots,T_d)\in\mathfrak{B}(\mathcal{H})^d$ be a spherically quasinormal $d$-tuple. Then $\mathbf{T}^n$ is  spherically quasinormal for all $n\in\mathbb{N}$.
	\end{theorem}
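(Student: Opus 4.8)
The plan is to combine the characterization of spherically quasinormal tuples given in Theorem~\ref{charact} (specifically, the equivalence $(i)\Leftrightarrow(ii)$) with the multiplicativity property of the $\Theta$ operator recorded in Corollary~\ref{theta_powers}. Since $\mathbf{T}$ is spherically quasinormal, part $(ii)$ of Theorem~\ref{charact} tells us that $\Theta_{\mathbf{T}}^m(I)=\left[\Theta_{\mathbf{T}}(I)\right]^m$ for all $m\in\mathbb{N}$. The goal is to verify the same identity with $\mathbf{T}$ replaced by $\mathbf{T}^n$, i.e. to show $\Theta_{\mathbf{T}^n}^k(I)=\left[\Theta_{\mathbf{T}^n}(I)\right]^k$ for all $k\in\mathbb{N}$, and then invoke $(ii)\Rightarrow(i)$ of Theorem~\ref{charact} to conclude that $\mathbf{T}^n$ is spherically quasinormal. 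One should first note that $\mathbf{T}^n$ is a commuting tuple: its entries are products of the commuting operators $T_1,\dots,T_d$, so Theorem~\ref{charact} is applicable to it.

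The core computation is short. Fix $n\in\mathbb{N}$. By Corollary~\ref{theta_powers}, for every $k\in\mathbb{N}$ we have $\Theta_{\mathbf{T}^n}^k=\Theta_{\mathbf{T}}^{nk}$ as operators on $\mathfrak{B}(\mathcal{H})$; in particular $\Theta_{\mathbf{T}^n}^k(I)=\Theta_{\mathbf{T}}^{nk}(I)$. Using $(i)\Rightarrow(ii)$ of Theorem~\ref{charact} applied to the spherically quasinormal tuple $\mathbf{T}$, the right-hand side equals $\left[\Theta_{\mathbf{T}}(I)\right]^{nk}=\left(\left[\Theta_{\mathbf{T}}(I)\right]^{n}\right)^{k}$. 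Applying the same identity once more (or just the case $k=1$ of Corollary~\ref{theta_powers} together with part $(ii)$), $\left[\Theta_{\mathbf{T}}(I)\right]^{n}=\Theta_{\mathbf{T}}^{n}(I)=\Theta_{\mathbf{T}^n}(I)$. Substituting back gives
\begin{equation*}
	\Theta_{\mathbf{T}^n}^k(I)=\left(\left[\Theta_{\mathbf{T}}(I)\right]^{n}\right)^{k}=\left[\Theta_{\mathbf{T}^n}(I)\right]^k,
\end{equation*}
which is precisely condition $(ii)$ of Theorem~\ref{charact} for the tuple $\mathbf{T}^n$. Since $\mathbf{T}^n$ is commuting, $(ii)\Rightarrow(i)$ yields that $\mathbf{T}^n$ is spherically quasinormal, completing the proof.

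I do not anticipate a genuine obstacle here; the work has already been front-loaded into Theorem~\ref{charact} and Corollary~\ref{theta_powers}. The only points requiring a little care are bookkeeping ones: confirming that $\mathbf{T}^n$ is indeed a commuting tuple so that the characterization theorem applies, and correctly chaining the two instances of the identity $\Theta_{\mathbf{T}}^j(I)=\left[\Theta_{\mathbf{T}}(I)\right]^j$ (once with exponent $nk$ and once with exponent $n$) together with the identification $\Theta_{\mathbf{T}^n}(I)=\Theta_{\mathbf{T}}^n(I)$ coming from the $k=1$ case of Corollary~\ref{theta_powers}. An alternative, essentially equivalent, route would be to use the spectral-measure characterization $(iii)$: if $E$ is the spectral measure of $\Theta_{\mathbf{T}}(I)$, then $\Theta_{\mathbf{T}^n}^k(I)=\Theta_{\mathbf{T}}^{nk}(I)=\int_{\mathbb{R}_+} x^{nk}\,E(dx)=\int_{\mathbb{R}_+}(x^n)^k\,E(dx)$, and pushing $E$ forward under $x\mapsto x^n$ exhibits a spectral measure witnessing condition $(iii)$ for $\mathbf{T}^n$; but the direct argument via $(ii)$ is cleaner and avoids invoking the pushforward.
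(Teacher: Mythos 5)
Your proposal is correct and follows essentially the same route as the paper: the identical chain $\Theta_{\mathbf{T}^n}^k(I)=\Theta_{\mathbf{T}}^{nk}(I)=\left[\Theta_{\mathbf{T}}(I)\right]^{nk}=\left(\left[\Theta_{\mathbf{T}}(I)\right]^{n}\right)^k=\left[\Theta_{\mathbf{T}^n}(I)\right]^k$ via Corollary~\ref{theta_powers} and Theorem~\ref{charact}$(i)\Leftrightarrow(ii)$. Your extra remark on verifying that $\mathbf{T}^n$ is a commuting tuple is a sensible point of care but does not change the argument.
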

	
	\begin{proof}
		Assume that $\mathbf{T}$ is spherically quasinormal  and let $n, k\in\mathbb{N}$ be arbitrary. By Theorem \ref{charact} and Corollary \ref{theta_powers}, we have that
		\begin{equation*}
			\Theta_{\mathbf{T}^n}^k(I)=\Theta_{\mathbf{T}}^{nk}(I)=\left[\Theta_{\mathbf{T}}(I)\right]^{nk}=\left(\left[\Theta_{\mathbf{T}}(I)\right]^{n}\right)^k=\left[\Theta_{\mathbf{T}}^n(I)\right]^k=\left[\Theta_{\mathbf{T}^n}(I)\right]^k.
		\end{equation*}
		By invoking Theorem \ref{charact} once again, we conclude that $\mathbf{T}^n$ is spherically quasinormal.
	\end{proof}
	
	The converse of the previous theorem does not hold in general, even in a one-dimensional case. For example, $T=\begin{bsmallmatrix}
		0&1\\ 0&0
	\end{bsmallmatrix}$ is not quasinormal (normal), while $T^2$ certainly is. Thus, Curto et al. in \cite{CurtoLeeYoon20} raised the following question: "Let T be a subnormal operator, and
	assume that $T^2$ is quasinormal. Does it follow that T is quasinormal?" In \cite[Theorem 1.2]{PietrzyckiStochel21} and \cite[Theorem 2.2]{Stankovic23a} it was shown that, under the assumption that $T$ is subnormal, and $T^n$ is quasinormal, for some $n\in\mathbb{N}$, then $T$ must also be quasinormal. The next theorem shows that the analogous statement is also true in more than one dimension, i.e., we answer a multivariable version of the mentioned question in the affirmative, by considering subnormal and spherically quasinormal tuples.
	
	\begin{theorem}\label{spquasiroots}
		Let $\mathbf{T} = (T_1,\ldots,T_d)\in\mathfrak{B}(\mathcal{H})^d$ be a subnormal $d$-tuple. If $\mathbf{T}^n$ is  spherically quasinormal for some $n\in\mathbb{N}$, then $\mathbf{T}$ is also spherically quasinormal.
	\end{theorem}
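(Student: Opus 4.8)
The plan is to reduce the claim to condition $(v)$ of Theorem \ref{charact}. Fix a normal extension $\mathbf{N}=(N_1,\ldots,N_d)\in\mathfrak{B}(\mathcal{K})^d$ of $\mathbf{T}$, so that $\mathcal{K}\supseteq\mathcal{H}$ and $N_k=\begin{bsmallmatrix} T_k&\ast\\ 0&\ast\end{bsmallmatrix}$ with respect to $\mathcal{H}\oplus\mathcal{H}^\perp$ for each $k$. The first step would be to observe that $\mathbf{N}^n$ is a normal extension of $\mathbf{T}^n$: since the $N_k$'s form a commuting family of normal operators, the Fuglede--Putnam theorem shows that each product $N_{i_1}\cdots N_{i_n}$ is normal, and all these products commute with one another, so $\mathbf{N}^n$ is a commuting normal tuple on $\mathcal{K}$; moreover, the product of operators that are block upper triangular for the decomposition $\mathcal{H}\oplus\mathcal{H}^\perp$ is again block upper triangular, with $(1,1)$-entry equal to the product of the $(1,1)$-entries, so the component $N_{i_1}\cdots N_{i_n}$ of $\mathbf{N}^n$ has $(1,1)$-entry $T_{i_1}\cdots T_{i_n}$, i.e.\ the corresponding component of $\mathbf{T}^n$.

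Next I would invoke the hypothesis. Since $\mathbf{T}^n$ is spherically quasinormal, the implication $(i)\Rightarrow(v)$ of Theorem \ref{charact}, applied to the tuple $\mathbf{T}^n$ and its normal extension $\mathbf{N}^n$, shows that $\mathcal{H}$ is invariant for $\Theta_{\mathbf{N}^n}(I)$. By Corollary \ref{theta_powers} applied to $\mathbf{N}$ (with $k=1$) we have $\Theta_{\mathbf{N}^n}=\Theta_{\mathbf{N}}^n$, and since $\mathbf{N}$ is normal --- hence spherically quasinormal --- the implication $(i)\Rightarrow(ii)$ of Theorem \ref{charact} gives $\Theta_{\mathbf{N}}^n(I)=\left[\Theta_{\mathbf{N}}(I)\right]^n$. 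Writing $Q:=\Theta_{\mathbf{N}}(I)=\sum_{k=1}^{d}N_k^*N_k\geq 0$, I have thus reduced matters to the statement that the closed subspace $\mathcal{H}$ is invariant for $Q^n$.

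It then remains to remove the exponent, that is, to deduce that $\mathcal{H}$ is invariant for $Q$ itself, and this is the step where the positivity of $Q$ is essential. The argument I have in mind is: the function $t\mapsto t^{1/n}$ is continuous on the compact set $\sigma(Q^n)\subseteq[0,\norm{Q}^n]$, so by the Weierstrass approximation theorem there are polynomials $p_j$ converging to it uniformly on $\sigma(Q^n)$; by the continuous functional calculus $p_j(Q^n)\to (Q^n)^{1/n}=Q$ in operator norm, each $p_j(Q^n)$ leaves $\mathcal{H}$ invariant because $Q^n$ does, and hence so does the limit $Q$. Therefore $\mathcal{H}$ is invariant for $\Theta_{\mathbf{N}}(I)$; since $\mathbf{T}$ is subnormal and $\mathbf{N}$ was an arbitrary normal extension, condition $(v)$ of Theorem \ref{charact} holds, and the implication $(v)\Rightarrow(i)$ gives that $\mathbf{T}$ is spherically quasinormal.

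I do not expect any genuinely hard step; the two points that need care are the verification that $\mathbf{N}^n$ is a normal extension of $\mathbf{T}^n$ (normality of products of commuting normal operators and stability of the block-upper-triangular form under multiplication) and the functional-calculus extraction of the positive $n$-th root of $Q$ while preserving invariance of $\mathcal{H}$. A more computational alternative would bypass condition $(v)$ altogether: one checks by induction on $m$ that $\Theta_{\mathbf{T}}^m(I)=P_{\mathcal{H}}\,\Theta_{\mathbf{N}}^m(I)\restriction_{\mathcal{H}}$ for any subnormal $\mathbf{T}$ with normal extension $\mathbf{N}$, combines this with the operator identity $\Theta_{\mathbf{T}}^{nk}(I)=\left[\Theta_{\mathbf{T}}^{n}(I)\right]^k$ (a consequence of Corollary \ref{theta_powers} and Theorem \ref{charact} applied to $\mathbf{T}^n$), and then uses determinacy of compactly supported measures for the spectral measure of $Q=\Theta_{\mathbf{N}}(I)$ to conclude $\Theta_{\mathbf{T}}^m(I)=\left[\Theta_{\mathbf{T}}(I)\right]^m$ for all $m$; but the route through Theorem \ref{charact}$(v)$ seems shortest.
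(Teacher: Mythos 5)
Your proposal is correct and follows essentially the same route as the paper: pass to the normal extension, use Theorem \ref{charact}$(v)$ and Corollary \ref{theta_powers} to get that $\mathcal{H}$ is invariant for $\left[\Theta_{\mathbf{N}}(I)\right]^n$, and then extract invariance for $\Theta_{\mathbf{N}}(I)$ itself. The only (cosmetic) difference is in that last step: the paper converts invariance into commutation with $P_{\mathcal{H}}$ and cites the fact that a positive operator and its $n$-th root have the same commutant, whereas you approximate $t\mapsto t^{1/n}$ by polynomials via the continuous functional calculus --- which is precisely the standard proof of that cited fact, so the two arguments coincide in substance.
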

	
	\begin{proof}
		Let $\mathbf{N} = (N_1,\ldots,N_d)\in\mathfrak{B}(\mathcal{K})^d$ be an arbitrary normal extension of $\mathbf{T}$, where $\mathcal{K}\supseteq\mathcal{H}$. Using the commutativity of $\mathbf{N}$ and Fuglede-Putnam theorem (\cite{Fuglede50}), it is easy to see that $\mathbf{N}^n$ is a normal extension of $\mathbf{T}^n$. Since $\mathbf{T}^n$ is, by assumption, spherically quasinormal, Theorem \ref{charact} yields that $\mathcal{H}$ is invariant for $\Theta_{\mathbf{N}^n}(I)$. By Corollary \ref{theta_powers}, $\Theta_{\mathbf{N}^n}(I)=\Theta_{\mathbf{N}}^n(I)$, while the normality of $\mathbf{N}$ and Theorem \ref{charact} imply that $\Theta_{\mathbf{N}}^n(I)=\left[\Theta_{\mathbf{N}}(I)\right]^n$. Thus, $\mathcal{H}$ is invariant for $\left[\Theta_{\mathbf{N}}(I)\right]^n$.\\
		Now let $P_{\mathcal{H}}\in\mathfrak{B}(\mathcal{K})$ be the orthogonal projection onto $\mathcal{H}$. Then it is easy to see that $$P_{\mathcal{H}}\left[\Theta_{\mathbf{N}}(I)\right]^nP_{\mathcal{H}}=\left[\Theta_{\mathbf{N}}(I)\right]^nP_{\mathcal{H}},$$ and since $\Theta_{\mathbf{N}}(I)$ is positive, by taking adjoints, we have that $\left[\Theta_{\mathbf{N}}(I)\right]^n$ and $P_{\mathcal{H}}$ commute. Using the fact that the commutants of a positive operator and its $n$-th root coincide (see \cite[Theorem 12.23]{Rudin91} or \cite[Theorem 7.20]{Weidmann80}), we have that $\Theta_{\mathbf{N}}(I)$ and $P_{\mathcal{H}}$ commute, which implies that $\mathcal{H}$ is invariant for $\Theta_{\mathbf{N}}(I)$. Theorem \ref{charact} finally yields that $\mathbf{T}$ is spherically quasinormal.
	\end{proof}
	
	\begin{corollary}\label{nth_root_pure}
		Let $\mathbf{T} = (T_1,\ldots,T_d)\in\mathfrak{B}(\mathcal{H})^d$ be a subnormal operator such that $\mathbf{T}^n$ is pure spherically  quasinormal for some $n\in\mathbb{N}$. Then $\mathbf{T}$ is pure spherically quasinormal.
	\end{corollary}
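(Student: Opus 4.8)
The plan is to reduce everything to Theorem~\ref{spquasiroots} together with a short ``purity transfer'' argument. First, since $\mathbf{T}$ is subnormal and $\mathbf{T}^n$ is spherically quasinormal, Theorem~\ref{spquasiroots} immediately gives that $\mathbf{T}$ itself is spherically quasinormal. Thus the only remaining point is to check that $\mathbf{T}$ is \emph{pure}, and I would do this by contraposition: assuming $\mathbf{T}$ is not pure, I want to exhibit a nontrivial common reducing subspace on which $\mathbf{T}^n$ is normal, which would contradict the purity of $\mathbf{T}^n$.

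Concretely, suppose $\mathbf{T}$ is not pure, and pick a nontrivial subspace $\mathcal{M}\subseteq\mathcal{H}$ that is a common reducing subspace for $T_1,\dots,T_d$ and such that the corestricted operators $P^{cr}T_1|_{\mathcal{M}},\dots,P^{cr}T_d|_{\mathcal{M}}$ form a normal $d$-tuple on $\mathcal{M}$. Since $\mathcal{M}$ reduces each $T_k$, the corestrictions are just the honest restrictions $T_k|_{\mathcal{M}}$, and $\mathcal{M}$ is then invariant (indeed reducing) for every finite product $T_{k_1}\cdots T_{k_n}$, hence for every coordinate of $\mathbf{T}^n$; moreover $(\mathbf{T}^n)|_{\mathcal{M}}=(\mathbf{T}|_{\mathcal{M}})^n$ coordinatewise. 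Now $\mathbf{T}|_{\mathcal{M}}$ is a commuting tuple of normal operators, so by the Fuglede--Putnam theorem every coordinate of $(\mathbf{T}|_{\mathcal{M}})^n$, being a product of commuting normal operators, is again normal, and the whole tuple $(\mathbf{T}^n)|_{\mathcal{M}}$ is commuting. Therefore $\mathcal{M}$ is a nontrivial common reducing subspace for $\mathbf{T}^n$ on which the corestrictions form a normal tuple, contradicting the hypothesis that $\mathbf{T}^n$ is pure.

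Combining the two steps yields that $\mathbf{T}$ is both spherically quasinormal and pure, as claimed. The argument has no genuinely hard step; the only points requiring a line of care are that a subspace reducing each $T_k$ automatically reduces all their products (so that purity of $\mathbf{T}$ can be phrased directly in terms of $\mathbf{T}|_{\mathcal{M}}$ being normal), and that a product of commuting normal operators is normal, which is precisely an application of Fuglede--Putnam.
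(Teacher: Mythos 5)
Your proposal is correct and follows essentially the same route as the paper: reduce spherical quasinormality to Theorem~\ref{spquasiroots}, then argue purity by contradiction via a nontrivial reducing subspace $\mathcal{M}$ on which $\mathbf{T}$ restricts to a normal tuple, observing that $(\mathbf{T}^n)\restriction_{\mathcal{M}}=(\mathbf{T}\restriction_{\mathcal{M}})^n$ is then normal. The only difference is cosmetic: the paper phrases the restriction via a block-diagonal matrix representation and leaves the normality of the restricted power as ``easily seen,'' whereas you make explicit the (correct) Fuglede--Putnam justification that a product of commuting normal operators is normal.
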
 
	
	\begin{proof}
		Quasinormality follows from Theorem \ref{spquasiroots}. If $\mathbf{T}$ is not pure, then there is a non-zero reducing subspace $\mathcal{M}$ of $\mathcal{H}$ such that $$P^{cr}_{\M}\mathbf{T}\restriction_{\mathcal{M}}=(P^{cr}_{\M}T_1\restriction_{\mathcal{M}},\ldots, P^{cr}_{\M}T_d\restriction_{\mathcal{M}})$$ is normal. By representing $\mathbf{T}$ as 
		\begin{equation*}
			T_k=\begin{bmatrix}
				T_{k,n}&0\\
				0&T_{k,p}
			\end{bmatrix}:\,\begin{pmatrix}
				\mathcal{M}\\\mathcal{M}^\perp
			\end{pmatrix}\to \begin{pmatrix}
				\mathcal{M}\\\mathcal{M}^\perp
			\end{pmatrix},\quad k\in\{1,\ldots,d\},
		\end{equation*}
		where $T_{k,n}=P^{cr}_{\M}T_k\restriction_{\mathcal{M}}$, $k\in\{1,\ldots,d\}$, it can be easily seen that $P^{cr}_{\M}\mathbf{T}^n\restriction_\mathcal{M}=(P^{cr}_{\M}\mathbf{T}\restriction_\mathcal{M})^n$ is also normal, from where it follows that $\mathbf{T}^n$ is not pure. The obtained contradiction shows that $\mathbf{T}$ must be pure.
	\end{proof}

	As a direct consequence of Theorem \ref{sphquasipowers} and Theorem \ref{spquasiroots}, we have the following:
	\begin{corollary}\label{sphpowerrootseq}
		Let $\mathbf{T}\in\mathfrak{B}(\mathcal{H})^d$ be a subnormal $d$-tuple. Then the following conditions are equivalent:
		\begin{enumerate}[$(i)$]
			\item $\mathbf{T}$ is  spherically quasinormal;
			\item $\mathbf{T}^n$ is spherically quasinormal for all $n\in\mathbb{N}$;
			\item $\mathbf{T}^n$ is spherically quasinormal for some $n\in\mathbb{N}$.
		\end{enumerate}
	\end{corollary}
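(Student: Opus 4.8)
The plan is to verify the cyclic chain of implications $(i)\Rightarrow(ii)\Rightarrow(iii)\Rightarrow(i)$, each link of which has essentially already been established above.

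For $(i)\Rightarrow(ii)$ I would simply invoke Theorem \ref{sphquasipowers}: if $\mathbf{T}$ is spherically quasinormal, then $\mathbf{T}^n$ is spherically quasinormal for every $n\in\mathbb{N}$; no use of subnormality is needed here. The implication $(ii)\Rightarrow(iii)$ is immediate, since one only has to instantiate the universally quantified statement at a single value of $n$ (say $n=1$), so there is nothing to prove.

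The remaining implication $(iii)\Rightarrow(i)$ is where the standing hypothesis that $\mathbf{T}$ is subnormal enters, and it is exactly the content of Theorem \ref{spquasiroots}: if $\mathbf{T}$ is subnormal and $\mathbf{T}^n$ is spherically quasinormal for some $n\in\mathbb{N}$, then $\mathbf{T}$ itself is spherically quasinormal. Closing the loop then yields the asserted equivalence of $(i)$, $(ii)$ and $(iii)$. I would also remark that the subnormality assumption cannot be dropped: the nilpotent Jordan block $T=\begin{bsmallmatrix} 0 & 1 \\ 0 & 0 \end{bsmallmatrix}$, regarded as a one-tuple, has $T^2=0$ spherically quasinormal while $T$ itself is not.

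Since both nontrivial implications are quoted theorems, there is no real obstacle left. The only point one would have to reprove in order to make the argument self-contained is the ``hard'' direction $(iii)\Rightarrow(i)$, whose proof rests on the characterization in Theorem \ref{charact}$(v)$ --- invariance of $\mathcal{H}$ for $\Theta_{\mathbf{N}}(I)$ under any normal extension $\mathbf{N}$ --- together with the fact that a positive operator and its $n$-th root have the same commutant.
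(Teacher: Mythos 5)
Your proposal is correct and matches the paper exactly: the corollary is stated there as a direct consequence of Theorem \ref{sphquasipowers} (giving $(i)\Rightarrow(ii)$) and Theorem \ref{spquasiroots} (giving $(iii)\Rightarrow(i)$), with $(ii)\Rightarrow(iii)$ trivial. Your added remark on the necessity of subnormality likewise mirrors the paper's own discussion preceding Theorem \ref{spquasiroots}.
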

	
	The next result represents an improvement of \cite[Theorem 2.8]{Stankovic23a}.
	\begin{corollary}
		Let $\mathbf{T}\in\mathfrak{B}(\mathcal{H})^d$ be a subnormal $d$-tuple such that $T_iT_j=0$, for $i,j\in\{1,\ldots,d\}$, $i\neq j$. Then the following conditions are equivalent:
		\begin{enumerate}[$(i)$]
			\item $\mathbf{T}$ is  spherically quasinormal;
			\item $\mathbf{T}^{(n)}$ is spherically quasinormal for all $n\in\mathbb{N}$;
			\item $\mathbf{T}^{(n)}$ is spherically quasinormal for some $n\in\mathbb{N}$.
		\end{enumerate}
	\end{corollary}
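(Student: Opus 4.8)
The plan is to reduce the statement to Corollary~\ref{sphpowerrootseq} by showing that, under the assumption $T_iT_j=0$ for $i\neq j$, the tuple $\mathbf{T}^n$ and the tuple $\mathbf{T}^{(n)}$ are interchangeable as far as spherical quasinormality is concerned.

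First I would unwind the two power conventions. From $\mathbf{T}^{(1)}=\mathbf{T}$ and $\mathbf{T}^{(n+1)}=\mathbf{T}\mathbf{T}^{(n)}$ one obtains by induction $\mathbf{T}^{(n)}=(T_1^n,\ldots,T_d^n)$, while the entries of $\mathbf{T}^n=\mathbf{T}\circ\mathbf{T}^{n-1}$ are, again by induction, exactly the $d^n$ products $T_{i_1}T_{i_2}\cdots T_{i_n}$ ranging over $(i_1,\ldots,i_n)\in\{1,\ldots,d\}^n$. If such a multi-index is not constant, some two adjacent entries differ, say $i_k\neq i_{k+1}$, and then $T_{i_k}T_{i_{k+1}}=0$ annihilates the whole product; hence the only entries of $\mathbf{T}^n$ that can be nonzero are $T_1^n,\ldots,T_d^n$, each arising once. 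Consequently $\Theta_{\mathbf{T}^n}(X)=\sum_{k=1}^d(T_k^n)^*XT_k^n=\Theta_{\mathbf{T}^{(n)}}(X)$ for every $X\in\mathfrak{B}(\mathcal{H})$, i.e.\ $\Theta_{\mathbf{T}^n}=\Theta_{\mathbf{T}^{(n)}}$.

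Next I would note that spherical quasinormality of a commuting tuple is insensitive to permutations of the entries and to deletion (or insertion) of zero entries: the operator $\Theta$ depends only on the multiset $\{E^*E\}$ over the entries $E$, a zero entry commutes with everything, and the defining requirement is that each entry commute with $\Theta$ applied to $I$. Combining this with the previous paragraph, $\mathbf{T}^n$ (which is commuting since $\mathbf{T}$ is) is spherically quasinormal if and only if each $T_k^n$ commutes with $\Theta_{\mathbf{T}^n}(I)=\Theta_{\mathbf{T}^{(n)}}(I)$, that is, if and only if $\mathbf{T}^{(n)}$ is spherically quasinormal.

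With this equivalence in hand, the corollary is immediate from Corollary~\ref{sphpowerrootseq}. Indeed, $\mathbf{T}$ is subnormal, so if $(i)$ holds then $\mathbf{T}^n$ is spherically quasinormal for all $n$, whence $\mathbf{T}^{(n)}$ is spherically quasinormal for all $n$, giving $(ii)$; the implication $(ii)\Rightarrow(iii)$ is trivial; and if $(iii)$ holds, say $\mathbf{T}^{(n)}$ is spherically quasinormal for some $n$, then so is $\mathbf{T}^n$, and Corollary~\ref{sphpowerrootseq} forces $\mathbf{T}$ to be spherically quasinormal, i.e.\ $(i)$. The only delicate point is the bookkeeping in the first step — checking that every mixed product collapses and that precisely the pure powers $T_k^n$ survive — and I do not anticipate any genuine obstacle beyond that.
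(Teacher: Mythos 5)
Your proposal is correct and follows exactly the paper's route: the paper likewise reduces the statement to Corollary~\ref{sphpowerrootseq} via the observation that, when $T_iT_j=0$ for $i\neq j$, spherical quasinormality of $\mathbf{T}^{(n)}$ and of $\mathbf{T}^n$ are equivalent. The only difference is that the paper asserts this equivalence without proof, whereas you supply the verification that all mixed products in $\mathbf{T}^n$ vanish and hence $\Theta_{\mathbf{T}^n}=\Theta_{\mathbf{T}^{(n)}}$.
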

	\begin{proof}
		The proof follows from the fact that under the assumption $T_iT_j=0$, for $i,j\in\{1,\ldots,d\}$, $i\neq j$, we have that $\mathbf{T}^{(n)}$ is spherically quasinormal if and only if $\mathbf{T}^n$ is spherically quasinormal. Now, it only remains to apply Corollary \ref{sphpowerrootseq}.
	\end{proof}

	We end this section with the following conjecture which was recently shown  to be true in one-dimensional case (see \cite[Theorem 4.1]{PietrzyckiStochel23}), and if answered affirmatively, would extend and generalize Theorem \ref{spquasiroots}.
	\begin{conjecture}
		Let $\mathbf{T}\in\mathfrak{B}(\mathcal{H})^d$ be a jointly hyponormal $d$-tuple of commuting operators. If $\mathbf{T}^n$ is  spherically quasinormal for some $n\in\mathbb{N}$, then  $\mathbf{T}$ is also spherically quasinormal.
	\end{conjecture}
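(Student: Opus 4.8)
The plan is to transcribe the one-dimensional argument of \cite[Theorem 4.1]{PietrzyckiStochel23} into the spherical, $d$-variable language, with the elementary operator $\Theta_{\mathbf{T}}$ taking over the role played there by conjugation with $|T|$. Write $A:=\Theta_{\mathbf{T}}(I)\ge 0$ and $A_k:=\Theta_{\mathbf{T}}^{k}(I)$ for integers $k\ge 0$, so that $A_0=I$, $A_1=A$ and $A_{k+1}=\Theta_{\mathbf{T}}(A_k)$. By the equivalence $(i)\Leftrightarrow(iv)$ of Theorem \ref{charact}, it suffices to prove that $A_2=A^2$ and $A_3=A^3$. As a first, unconditional, consequence of joint hyponormality, evaluate the positivity relation $\sum_{i,j}\langle[T_j^*,T_i]f_j,f_i\rangle\ge 0$ (valid for all $f_1,\ldots,f_d\in\mathcal{H}$) at $f_i=T_ix$ and use that $\mathbf{T}$ is commuting; this collapses to
\begin{equation*}
0\;\le\;\sum_{i,j}\langle[T_j^*,T_i]T_jx,T_ix\rangle\;=\;\langle A_2x,x\rangle-\langle A^2x,x\rangle ,
\end{equation*}
so that $A_2\ge A^2$. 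Together with the Cauchy--Schwarz inequality $\langle Ax,x\rangle^2\le\langle A^2x,x\rangle\langle x,x\rangle$ this already gives a ``spherically paranormal'' estimate $\langle Ax,x\rangle^2\le\langle A_2x,x\rangle\langle x,x\rangle$; the natural continuation is to bootstrap it to the log-convexity of $k\mapsto\langle A_kx,x\rangle$, in analogy with the log-convexity of $k\mapsto\norm{T^kx}$ for hyponormal operators.

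On the other side, since $\mathbf{T}^n$ is spherically quasinormal, Theorem \ref{charact} together with Corollary \ref{theta_powers} gives $A_{nk}=\Theta_{\mathbf{T}^n}^{k}(I)=\bigl[\Theta_{\mathbf{T}^n}(I)\bigr]^{k}=A_n^{\,k}$ for every $k\in\mathbb{N}$, and, applying the same to $\mathbf{T}^{nj}$ --- which is spherically quasinormal by Theorem \ref{sphquasipowers} --- one obtains $A_{njk}=A_{nj}^{\,k}=A_n^{\,jk}$. Thus the ``moment sequence'' $(A_k)_{k\ge 0}$ is geometric along every arithmetic progression of common ratio $n$. It then remains to combine this rigidity with the hyponormal lower bounds above so as to force the reverse inequalities $A_2\le A^2$ and $A_3\le A^3$, after which the implication $(iv)\Rightarrow(i)$ of Theorem \ref{charact} completes the proof. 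A guiding model is the spherical weighted-shift one: there joint hyponormality makes the relevant moment subsequences monotone, quasinormality of $\mathbf{T}^n$ makes the $n$-fold products of consecutive weights constant, and monotonicity then forces all the weights to coincide; abstractly one would like to sandwich $\langle A_2x,x\rangle$ between $\langle A^2x,x\rangle$ from below and an $n$-th level iterate of the log-convexity estimate, of the shape $\langle A_n^{2}x,x\rangle^{1/n}\langle x,x\rangle^{1-1/n}$, from above, and then control the latter again by $\langle A^2x,x\rangle$.

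This last combination is the crux, and is the reason only the one-dimensional case is presently settled. Joint hyponormality is a degree-two condition which, unlike subnormality, is \emph{not} inherited by powers --- $\mathbf{T}^m$ need not be jointly hyponormal even when $m=2$ --- so the iterative mechanism behind Theorem \ref{spquasiroots} is not available, and neither is a normal extension of $\mathbf{T}$. What seems to be needed is either a genuine \emph{operator} inequality running opposite to $A_2\ge A^2$ (a vectorwise or numerical estimate will not close the gap), or a $d$-variable counterpart of the finer Pietrzycki--Stochel estimates robust enough to transport the equalities $A_{nk}=A_n^{\,k}$ down to all indices $k$.
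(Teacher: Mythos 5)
This statement is posed in the paper as an open \emph{conjecture}: the paper offers no proof of it, only the remark that the one-dimensional case was settled in \cite[Theorem 4.1]{PietrzyckiStochel23} and that an affirmative answer would generalize Theorem \ref{spquasiroots}. So there is no argument of the author's to compare yours against, and the only question is whether your proposal actually closes the problem. It does not, and you say so yourself. Your preliminary steps are sound: the evaluation of the joint hyponormality form at $f_i=T_ix$ does yield $\Theta_{\mathbf{T}}^2(I)\ge\left[\Theta_{\mathbf{T}}(I)\right]^2$ (the computation using commutativity is correct), and the rigidity $\Theta_{\mathbf{T}}^{nk}(I)=\left[\Theta_{\mathbf{T}^n}(I)\right]^k$ follows correctly from Theorem \ref{charact} and Corollary \ref{theta_powers}. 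But the conclusion requires the two \emph{equalities} $\Theta_{\mathbf{T}}^2(I)=\left[\Theta_{\mathbf{T}}(I)\right]^2$ and $\Theta_{\mathbf{T}}^3(I)=\left[\Theta_{\mathbf{T}}(I)\right]^3$, i.e.\ the reverse inequalities, and these are never derived: the proposed bridge (log-convexity of $k\mapsto\scal{\Theta_{\mathbf{T}}^k(I)x}{x}$ plus interpolation from the sparse identities at multiples of $n$ down to $k=2,3$) is stated as a hope, not carried out, and your closing paragraph explicitly concedes that this combination is exactly the unsolved crux. Note also that even in one variable, Pietrzycki and Stochel do not argue this way; their proof rests on delicate operator-inequality machinery (their Theorem 3.1 and the analysis of the equality case), for which no spherical analogue is exhibited here.

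A concrete warning about the intermediate claim you flag as ``natural'': the scalar log-convexity $\scal{A_kx}{x}^2\le\scal{A_{k+1}x}{x}\scal{A_{k-1}x}{x}$ does not follow from joint hyponormality by the same substitution trick, because for $k\ge 2$ the vectors one must feed into the hyponormality form are of the type $T_{i_1}\cdots T_{i_{k-1}}x$ with free multi-indices, and the resulting quadratic form mixes the entries of $\mathbf{T}^{k-1}$, whose joint hyponormality is not guaranteed (as you correctly note, joint hyponormality is not inherited by powers). So even the scaffolding of the bootstrap is unproven, not merely its final assembly. The statement therefore remains, as in the paper, a conjecture.
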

	
	\section{Relations with the minimal normal extension and the subnormal dual}\label{mne_dual_rel_sec}
	
	We begin this section by proving the hereditary property of the polar decomposition between a pure spherically quasinormal tuple, and its minimal normal extension. First, we need the following theorem, which is of independent interest.

	\begin{theorem}\label{injective_N}
		Let $\mathbf{T}\in\mathfrak{B}(\mathcal{H})^d$ be a spherically quasinormal $d$-tuple with the minimal normal extension $\mathbf{N}\in\mathfrak{B}(\mathcal{K})^d$, $\mathcal{K}\supseteq\mathcal{H}$. If $\mathbf{T}$ is pure, then $\mathbf{N}$ is injective.
	\end{theorem}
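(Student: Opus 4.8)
The plan is to argue by contradiction. Suppose $\mathbf{N}$ is not injective, i.e.\ $\mathcal{L}:=\mathcal{N}(\mathbf{N})=\bigcap_{k=1}^d\mathcal{N}(N_k)\neq\{0\}$. Since $\mathbf{N}$ is a normal tuple, each $N_k$ is normal and the $N_k$ mutually commute, hence $\mathcal{N}(N_k)=\mathcal{N}(N_k^*)$ and $\mathcal{N}(N_k)$ is reducing for every $N_j$; consequently $\mathcal{L}$ is a common reducing subspace for $N_1,\dots,N_d$, on which every $N_k$ acts as $0$. In particular $\mathbf{N}$ restricted to $\mathcal{L}$ is the zero (hence normal) tuple. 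The first key point is to relate $\mathcal{L}$ to $\mathcal{H}$: I would use Remark \ref{remark_theta_N}, which says that for a spherically quasinormal $\mathbf{T}$ with normal extension $\mathbf{N}$ as in \eqref{dual}, $\Theta_{\mathbf{N}}(I)=\Theta_{\mathbf{T}}(I)\oplus\Theta_{\mathbf{S}}(I)$ with respect to $\mathcal{H}\oplus\mathcal{H}^\perp$, so in particular $\mathcal{H}$ reduces $\Theta_{\mathbf{N}}(I)=\sum_k N_k^*N_k$. Since $\mathcal{L}=\mathcal{N}\bigl(\Theta_{\mathbf{N}}(I)\bigr)$ (as $\Theta_{\mathbf{N}}(I)x=0 \iff \sum_k\|N_kx\|^2=0 \iff x\in\mathcal{L}$), and $\mathcal{H}$ reduces $\Theta_{\mathbf{N}}(I)$, we get $\mathcal{L}=(\mathcal{L}\cap\mathcal{H})\oplus(\mathcal{L}\cap\mathcal{H}^\perp)$.

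Next I would exploit minimality. The reducing subspace $\mathcal{L}$ of $\mathbf{N}$ is, as observed, the kernel of each $N_k$, so $\mathcal{L}^\perp$ is also reducing for $\mathbf{N}$; write $\mathbf{N}=0_{\mathcal{L}}\oplus\mathbf{N}'$ on $\mathcal{L}\oplus\mathcal{L}^\perp$. The idea is that the summand $\mathcal{L}\cap\mathcal{H}$ of $\mathcal{H}$ "survives" into $\mathcal{L}$, and minimality will force this piece to be trivial. Concretely, set $\mathcal{M}:=\mathcal{L}\cap\mathcal{H}\subseteq\mathcal{H}$. Since $T_k x = P_{\mathcal{H}}N_k x = P_{\mathcal{H}}\cdot 0 = 0$ for $x\in\mathcal{M}$, $\mathcal{M}\subseteq\mathcal{N}(T_k)$ for all $k$; and because $\Theta_{\mathbf{T}}(I)$ restricted to $\mathcal{H}$ has kernel $\mathcal{N}(\mathbf{T})$, while $\mathbf{T}$ spherically quasinormal gives (via Theorem \ref{charact}$(vi)$) that the initial space $\mathcal{N}(\mathbf{T})^\perp$ reduces $\mathbf{T}$, one sees $\mathcal{M}=\mathcal{N}(\mathbf{T})$ is a common reducing subspace for $T_1,\dots,T_d$ on which each $T_k=0$. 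Thus $\mathbf{T}$ has a normal (indeed zero) direct summand unless $\mathcal{M}=\{0\}$; since $\mathbf{T}$ is assumed pure, $\mathcal{M}=\mathcal{L}\cap\mathcal{H}=\{0\}$, so by the decomposition above $\mathcal{L}\subseteq\mathcal{H}^\perp$.

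Now the minimality of $\mathbf{N}$ delivers the contradiction. Consider $\mathcal{K}_0:=\mathcal{L}^\perp=\mathcal{K}\ominus\mathcal{L}$. Since $\mathcal{L}$ is reducing for every $N_k$, so is $\mathcal{K}_0$, and $\mathcal{K}_0$ contains $\mathcal{H}$ because $\mathcal{L}\subseteq\mathcal{H}^\perp$ forces $\mathcal{H}\perp\mathcal{L}$. By the definition of minimal normal extension, the only reducing subspace of $\mathbf{N}$ that contains $\mathcal{H}$ is $\mathcal{K}$ itself, so $\mathcal{K}_0=\mathcal{K}$, i.e.\ $\mathcal{L}=\{0\}$ — contradicting our assumption. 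Hence $\mathbf{N}$ is injective. The step I expect to be the main obstacle is the middle one: pinning down precisely that $\mathcal{L}\cap\mathcal{H}$ is a \emph{reducing} (not merely invariant) subspace of $\mathbf{T}$ on which $\mathbf{T}$ is normal, so that purity applies — this is where one must carefully combine the block form \eqref{dual}, the orthogonal splitting $\Theta_{\mathbf{N}}(I)=\Theta_{\mathbf{T}}(I)\oplus\Theta_{\mathbf{S}}(I)$ from Remark \ref{remark_theta_N}, and the polar-decomposition characterization $\mathbf{V}P=P\mathbf{V}$ from Theorem \ref{charact}$(vi)$, the latter being what makes $\mathcal{N}(\mathbf{T})$ reducing rather than just invariant.
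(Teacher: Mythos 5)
Your proposal is correct and follows essentially the same route as the paper: purity plus the commuting spherical polar decomposition $\mathbf{V}P=P\mathbf{V}$ forces $\mathcal{N}(\mathbf{T})=\{0\}$, the block-diagonal form of $\Theta_{\mathbf{N}}(I)$ from Remark \ref{remark_theta_N} then places $\mathcal{N}(\mathbf{N})$ inside $\mathcal{H}^\perp$, and minimality finishes the argument. The only cosmetic difference is that you obtain $\mathcal{N}(\mathbf{N})\perp\mathcal{H}$ by splitting $\mathcal{N}(\Theta_{\mathbf{N}}(I))$ along $\mathcal{H}\oplus\mathcal{H}^\perp$, whereas the paper reaches the same conclusion via an inner-product computation using the dense range of $\Theta_{\mathbf{T}}(I)$.
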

	
	\begin{proof}
		Assume that $\mathbf{T}=(T_1,\ldots,T_d)\in\mathfrak{B}(\mathcal{H})^d$ is a spherically quasinormal $d$-tuple, and let $\mathbf{T}=\mathbf{V}P=(V_1P,\ldots,V_dP)$ be the spherical polar decomposition of $\mathbf{T}$. By Theorem \ref{charact}, we have that $\mathbf{V}P=P\mathbf{V}$. Since $\mathcal{N}(\mathbf{T})=\mathcal{N}(P)$,  if $h\in\mathcal{N}(\mathbf{T})=\displaystyle\cap_{k=1}^d\mathcal{N}(T_k)$, then
		\begin{equation*}
			T_k^*h=PV_k^*h=V_k^*Ph=0.
		\end{equation*}
		Thus, $\mathcal{N}(\mathbf{T})\subseteq\displaystyle\cap_{k=1}^d\mathcal{N}(T_k^*)=\mathcal{N}(\mathbf{T^*})$. This yields that $\mathcal{N}(\mathbf{T})$ is a reducing subspace for $\mathbf{T}$. Since $\mathbf{T}$ is assumed to be pure, it must be $\mathcal{N}(\mathbf{T})=\{0\}$. This further implies that $P^2=\Theta_{\mathbf{T}}(I)$ is injective with dense range, and that $\mathbf{V}$ is a spherical isometry.\par 
		
		Now let $\mathbf{N}=(N_1,\ldots,N_d)\in\mathfrak{B}(\mathcal{K})^d$ be the minimal normal extension of $\mathbf{T}$, where $\mathcal{K}=\mathcal{H}\oplus\mathcal{H}^\perp$. By Remark \ref{remark_theta_N}, we have that $\Theta_{\mathbf{N}}(I)$ has representation given by \eqref{theta_N_representation}.
		Let $\begin{pmatrix}
			f\\g
		\end{pmatrix}\in\mathcal{N}(\mathbf{N})$ and $h\in\mathcal{H}$ be arbitrary. Then,
		\begin{align*}
			\scal{\begin{pmatrix}
					f\\g
			\end{pmatrix}}{\begin{pmatrix}
					\Theta_{\mathbf{T}}(I)h\\0
			\end{pmatrix}}
			&=\scal{\begin{pmatrix}
					f\\g
			\end{pmatrix}}{\Theta_{\mathbf{N}}(I)\begin{pmatrix}h\\0
			\end{pmatrix}}\\
			&=\scal{\begin{pmatrix}
					f\\g
			\end{pmatrix}}{\sum_{k=1}^dN_k^*N_k\begin{pmatrix}h\\0
			\end{pmatrix}}\\
			&=\sum_{k=1}^d\scal{N_k\begin{pmatrix}
					f\\g
			\end{pmatrix}}{N_k\begin{pmatrix}h\\0
			\end{pmatrix}}\\
			&=0.
		\end{align*}
		Since $\Theta_{\mathbf{T}}(I)$ has dense range, it follows that $\mathcal{N}(\mathbf{N})\perp\mathcal{H}$. In other words, $\mathcal{H}\subseteq\mathcal{N}(\mathbf{N})^\perp$ and $\mathcal{N}(\mathbf{N})^\perp$ reduces $\mathbf{N}$, as $\mathbf{N}$ is a normal $d$-tuple. Finally, by utilizing the fact that $\mathbf{N}$ is the minimal normal extension for $\mathbf{T}$, we conclude that it must be $\mathcal{N}(\mathbf{N})=\{0_{\mathcal{K}}\}$.
	\end{proof}

	\begin{theorem}
		Let $\mathbf{T}=\mathbf{V}P=(V_1P,\ldots,V_dP)$ be the spherical polar decomposition of a pure spherically quasinormal $d$-tuple $\mathbf{T}\in\mathfrak{B}(\mathcal{H})^d$ and let $\mathbf{N}\in\mathfrak{B}(\mathcal{K})^d$, $\mathcal{K}\supseteq\mathcal{H}$, be the minimal normal extension of $\mathbf{T}$. If $\mathbf{N}=\mathbf{W}R=(W_1R,\ldots,W_dR)$ is the spherical polar decomposition of $\mathbf{N}$, then $\mathbf{V}=P_{\mathcal{H}}^{cr}\mathbf{W}\restriction_{\mathcal{H}}$ and $P=P_{\mathcal{H}}^{cr}R\restriction_{\mathcal{H}}$.
	\end{theorem}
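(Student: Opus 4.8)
The plan is to identify $R$ explicitly and then read off both equalities from Remark~\ref{remark_theta_N} together with the purity hypothesis; no computation beyond that in the cited remark is needed. Since $\mathbf{N}$ is normal, the positive factor in its spherical polar decomposition is $R=\sqrt{N_1^*N_1+\cdots+N_d^*N_d}=\sqrt{\Theta_{\mathbf{N}}(I)}$, and likewise $P=\sqrt{\Theta_{\mathbf{T}}(I)}$. Write $\mathbf{N}$ in the block form \eqref{dual} with respect to $\mathcal{K}=\mathcal{H}\oplus\mathcal{H}^\perp$. By Remark~\ref{remark_theta_N},
\begin{equation*}
	\Theta_{\mathbf{N}}(I)=\begin{bmatrix}\Theta_{\mathbf{T}}(I)&0\\0&\Theta_{\mathbf{S}}(I)\end{bmatrix},
\end{equation*}
and taking the (unique) positive square root of a block-diagonal positive operator block-by-block gives
\begin{equation*}
	R=\begin{bmatrix}\sqrt{\Theta_{\mathbf{T}}(I)}&0\\0&\sqrt{\Theta_{\mathbf{S}}(I)}\end{bmatrix}=\begin{bmatrix}P&0\\0&\sqrt{\Theta_{\mathbf{S}}(I)}\end{bmatrix}.
\end{equation*}
In particular $\mathcal{H}$ reduces $R$ and $Rh=Ph$ for every $h\in\mathcal{H}$, so $P_{\mathcal{H}}^{cr}R\restriction_{\mathcal{H}}=P$, which is the second asserted identity.

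For the statement about $\mathbf{V}$, I would fix $k\in\{1,\ldots,d\}$ and $h\in\mathcal{H}$ and compare two expressions for $N_kh$. On one hand, $N_k=W_kR$ and $Rh=Ph$, so $N_kh=W_k(Ph)$. On the other hand, since $\mathbf{N}$ extends $\mathbf{T}$ (the $(2,1)$-block of $N_k$ vanishes), $N_kh=T_kh\in\mathcal{H}$, and since $\mathbf{T}=\mathbf{V}P$ we have $T_kh=V_k(Ph)$. Hence $W_k(Ph)=V_k(Ph)$ for all $h\in\mathcal{H}$; that is, $W_k$ and $V_k$ agree on $\mathcal{R}(P)$, and their common value there lies in $\mathcal{H}$.

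Now I would invoke purity. As established in the proof of Theorem~\ref{injective_N}, purity of $\mathbf{T}$ forces $\mathcal{N}(\mathbf{T})=\{0\}$, so $P=\sqrt{\Theta_{\mathbf{T}}(I)}$ is injective and therefore has dense range in $\mathcal{H}$ (alternatively, this is immediate from Theorem~\ref{injective_N}, since $\mathbf{N}$ injective forces $R$, hence $P$, injective). Since $W_k\restriction_{\mathcal{H}}$ and $V_k$ are bounded and agree on the dense subspace $\mathcal{R}(P)$ of $\mathcal{H}$, they agree on all of $\mathcal{H}$; in particular $W_kh=V_kh\in\mathcal{H}$ for every $h\in\mathcal{H}$, whence $P_{\mathcal{H}}W_kh=V_kh$ and so $P_{\mathcal{H}}^{cr}W_k\restriction_{\mathcal{H}}=V_k$. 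Combining over $k$ gives $\mathbf{V}=P_{\mathcal{H}}^{cr}\mathbf{W}\restriction_{\mathcal{H}}$, completing the proof.

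The only real subtlety—and the point I would be most careful about—is that a priori $W_k$ maps $\mathcal{H}$ into $\mathcal{K}$, not into $\mathcal{H}$; it is precisely the density of $\mathcal{R}(P)$ coming from purity, via the continuity argument above, that forces the values of $W_k\restriction_{\mathcal{H}}$ back into $\mathcal{H}$ and makes the corestriction $P_{\mathcal{H}}^{cr}W_k\restriction_{\mathcal{H}}$ equal to $V_k$ on the nose. Keeping the codomains straight (noting that the corestriction of $P_{\mathcal{H}}$ has codomain exactly $\mathcal{H}$) is what needs attention; everything else is bookkeeping around Remark~\ref{remark_theta_N}.
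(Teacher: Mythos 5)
Your proposal is correct, and the first half (identifying $R$ via Remark~\ref{remark_theta_N} as the block-diagonal operator with blocks $P$ and $[\Theta_{\mathbf{S}}(I)]^{1/2}$, hence $P=P_{\mathcal{H}}^{cr}R\restriction_{\mathcal{H}}$) is exactly what the paper does. For the identity $\mathbf{V}=P_{\mathcal{H}}^{cr}\mathbf{W}\restriction_{\mathcal{H}}$ you take a slightly different, and in fact leaner, route: you compare $N_kh=W_k(Rh)=W_k(Ph)$ with $N_kh=T_kh=V_k(Ph)$ and conclude $W_k=V_k$ on $\overline{\mathcal{R}(P)}=\mathcal{H}$ by the density of $\mathcal{R}(P)$ coming from purity. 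The paper instead writes each $W_k$ as a $2\times 2$ block matrix on $\mathcal{H}\oplus\mathcal{H}^\perp$, multiplies out $W_kR$ to deduce $T_k=W_{k,1}P$ and $W_{k,3}P=0$ (hence $W_{k,3}=0$ by the same density), and then additionally verifies via $\Theta_{\mathbf{W}}(I)=I_{\mathcal{K}}$ that $\sum_k W_{k,1}^*W_{k,1}=I_{\mathcal{H}}$, i.e.\ that the compressed tuple is a spherical isometry, before identifying it with $\mathbf{V}$. Your density argument delivers $W_k\restriction_{\mathcal{H}}=V_k$ directly and makes that last verification unnecessary (it comes for free once $W_{k}\restriction_{\mathcal H}=V_k$ and $\mathbf{V}$ is known to be a spherical isometry); the paper's version has the side benefit of exhibiting the block structure $W_{k,3}=0$ of $\mathbf{W}$ explicitly. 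You are also right to flag that the only delicate point is showing $W_k(\mathcal{H})\subseteq\mathcal{H}$, which in both arguments is exactly where the density of $\mathcal{R}(P)$, i.e.\ the purity hypothesis, is used.
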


	\begin{proof}
		Assume that $\mathbf{T}=(T_1,\ldots,T_d)\in\mathfrak{B}(\mathcal{H})^d$ is a pure spherically quasinormal $d$-tuple with the minimal normal extension $\mathbf{N}=(N_1,\ldots,N_d)\in\mathfrak{B}(\mathcal{K})^d$.  By Remark \ref{remark_theta_N}, we have
		\begin{equation*}
			R=\left[\Theta_{\mathbf{N}}(I)\right]^\frac{1}{2}=\begin{bmatrix}
				\left[\Theta_{\mathbf{T}}(I)\right]^\frac{1}{2}&0\\
				0&\left[\Theta_{\mathbf{S}}(I)\right]^\frac{1}{2}
			\end{bmatrix}=\begin{bmatrix}
				P&0\\
				0&\left[\Theta_{\mathbf{S}}(I)\right]^\frac{1}{2}
			\end{bmatrix},
		\end{equation*}
		and thus, $P=P_{\mathcal{H}}^{cr}R\restriction_{\mathcal{H}}$.\par 
		Now let 
		\begin{equation*}
			W_k=\begin{bmatrix}
				W_{k,1}&W_{k,2}\\
				W_{k,3}&W_{k,4}
			\end{bmatrix}:\,\begin{pmatrix}
				\mathcal{H}\\\mathcal{H}^\perp
			\end{pmatrix}\to \begin{pmatrix}
				\mathcal{H}\\\mathcal{H}^\perp
			\end{pmatrix},\quad k\in\{1,\ldots,d\},
		\end{equation*}
		be the representation of $\mathbf{W}$ with respect to $\mathcal{K}=\mathcal{H}\oplus\mathcal{H}^\perp$, and let $k\in\{1,\ldots,d\}$ be arbitrary. Then, 
		\begin{align*}
			N_k&=\begin{bmatrix}
				W_{k,1}&W_{k,2}\\
				W_{k,3}&W_{k,4}
			\end{bmatrix} \begin{bmatrix}
				P&0\\
				0&\left[\Theta_{\mathbf{S}}(I)\right]^\frac{1}{2}
			\end{bmatrix}=\begin{bmatrix}
				W_{k,1}P&W_{k,2}X^\frac{1}{2}\\
				W_{k,3}P&W_{k,4}X^\frac{1}{2}
			\end{bmatrix}.
		\end{align*}
		Since $\mathcal{H}$ is invariant for $\mathbf{N}$, and $T_k=P_{\mathcal{H}}^{cr}N_k\restriction_{\mathcal{H}}$, we have that $T_k=W_{k,1}P$ and $W_{k,3}P=0$. As in the proof of Theorem \ref{injective_N}, we can conclude that $P$ has dense range, which further implies that $W_{k,3}=0$.\par 
		Furthermore, Theorem \ref{injective_N} also implies that $\mathcal{N}(\mathbf{W})=\mathcal{N}(\mathbf{V})=\{0_{\mathcal{K}}\}$, and thus $\mathbf{W}$ is a spherical isometry, i.e., $\Theta_{\mathbf{W}}(I)=I_{\mathcal{K}}$. This, together with
		\begin{align*}
			\Theta_{\mathbf{W}}(I)&=\sum_{k=1}^d\begin{bmatrix}
				W_{k,1}^*&0\\
				W_{k,2}^*&W_{k,4}^*
			\end{bmatrix}\begin{bmatrix}
				W_{k,1}&W_{k,2}\\
				0&W_{k,4}
			\end{bmatrix}=\sum_{k=1}^d\begin{bmatrix}
				W_{k,1}^*W_{k,1}&\ast\\
				\ast &\ast
			\end{bmatrix},
		\end{align*}
		yields that $\sum_{k=1}^d W_{k,1}^*W_{k,1}=I_{\mathcal{H}}$. In other words, $\mathbf{W}_1:=(W_{1,1},\ldots,W_{d,1})\in\mathfrak{B}(\mathcal{H})^d$ is a spherical isometry such that $\mathbf{T}=\mathbf{W}_1P$. This proves that $\mathbf{V}=P_{\mathcal{H}}^{cr}\mathbf{W}\restriction_{\mathcal{H}}$.
	\end{proof}

	Before we further proceed, recall the following definition from \cite{Athavale89}.
	
	\begin{definition}\cite[Definition 6]{Athavale89}
		Let $\mathbf{T}=(T_1,\ldots,T_d)\in\mathfrak{B}(\mathcal{H})^d$ be a (pure) subnormal $d$-tuple and let $\mathbf{N}=(N_1,\ldots,N_d)\in\mathfrak{B}(\mathcal{K})^d$, $\mathcal{K}\supseteq\mathcal{H}$, be the minimal normal extension of $\mathbf{T}$ given by \eqref{dual}.
		Then $\mathbf{S}=(S_1,\ldots,S_d)\in\mathfrak{B}(\K\ominus\H)^d$ is called the dual of $\mathbf{T}$.
	\end{definition}
	For more details on the duals of subnormal operators, especially in a one-dimensional case, see \cite{Conway81, CvetkovicIlicStankovic24, Murphy82, Saito92}.
	
	\begin{remark}\label{remark_dual}
		Assume that $\mathbf{T}=(T_1,\ldots,T_d)\in\mathfrak{B}(\mathcal{H})^d$ is a subnormal $d$-tuple with the minimal normal extension $\mathbf{N}=(N_1,\ldots,N_d)\in\mathfrak{B}(\mathcal{K})^d$, $\mathcal{K}\supseteq\mathcal{H}$, given by \eqref{dual}. Then $\mathbf{S}=(S_1,\ldots,S_d)\in\mathfrak{B}(\K\ominus\H)^d$ must be pure. Assume to the contrary, that there exists Hilbert spaces $\mathcal{L},\mathcal{M}\subseteq\mathcal{H}^\perp=\K\ominus\H$ such that  $\mathcal{M}$ is non-trivial, $\mathcal{H}^\perp=\mathcal{L}\oplus\mathcal{M}$ and
		\begin{equation*}
			S_k=\begin{bmatrix}
				S_{k,p}&0\\
				0&S_{k,n}
			\end{bmatrix}:\,\begin{pmatrix}
				\mathcal{L}\\\mathcal{M}
			\end{pmatrix}\to \begin{pmatrix}
				\mathcal{L}\\\mathcal{M}
			\end{pmatrix},\quad k\in\{1,\ldots,d\},
		\end{equation*}
		where $S_{k,n}$ is normal for all $k\in\{1,\ldots,d\}$. Then \eqref{dual} can be written as
		\begin{equation*}
			N_k=\begin{bmatrix}
				T_k&A_{k,1}&A_{k,2}\\
				0&S_{k,p}^*&0\\
				0&0&S_{k,n}^*
			\end{bmatrix}:\,\begin{pmatrix}
				\mathcal{H}\\	\mathcal{L}\\\mathcal{M}
			\end{pmatrix}\to \begin{pmatrix}
				\mathcal{H}\\	\mathcal{L}\\\mathcal{M}
			\end{pmatrix},\quad k\in\{1,\ldots,d\}.
		\end{equation*}
		Using the normality of $N_k$ and $S_{n,k}$, a direct computation shows that $A_{k,2}=0$ for all $ k\in\{1,\ldots,d\}$. This further implies that $N_k$ is normal on $\mathcal{H}\oplus\mathcal{L}\subsetneqq\mathcal{K}$ for all $ k\in\{1,\ldots,d\}$, which is a contradiction with the fact that $\mathbf{N}$ is the minimal normal extension of $\mathbf{T}$. Thus, $\mathbf{S}$ must be pure. \par 
		Under the additional assumption that $\mathbf{T}$ is pure, by using the similar technique, and representing \eqref{dual} as
		\begin{equation*}
			N_k^*=\begin{bmatrix}
				S_k&A_k^*\\
				0&T_k^*
			\end{bmatrix}:\,\begin{pmatrix}
				\mathcal{H}^\perp\\\mathcal{H}
			\end{pmatrix}\to \begin{pmatrix}
				\mathcal{H}^\perp\\\mathcal{H}
			\end{pmatrix},\quad k\in\{1,\ldots,d\},
		\end{equation*}
		we can show that $\mathbf{N^*}=(N_1^*,\ldots,N_d^*)$ is the minimal normal extension for $\mathbf{S}$.
	\end{remark}

	The next theorem first appears as \cite[Proposition 2.3]{AthavalePodder15}. Here, we provide a simplified proof of it based on the methods developed.
	
	\begin{theorem}
		Let $\mathbf{T}\in\mathfrak{B}(\mathcal{H})^d$ be a pure spherically quasinormal $d$-tuple. Then its dual is also a pure spherically quasinormal $d$-tuple.
	\end{theorem}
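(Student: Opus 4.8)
The plan is to exploit the symmetry between $\mathbf{T}$ and its dual $\mathbf{S}$ already established in Remark \ref{remark_dual}. Since $\mathbf{T}$ is pure spherically quasinormal, Remark \ref{remark_dual} tells us that $\mathbf{N}^* = (N_1^*,\ldots,N_d^*)$ is the minimal normal extension of $\mathbf{S}$, with $N_k^*$ upper-triangular with respect to the decomposition $\mathcal{K} = \mathcal{H}^\perp \oplus \mathcal{H}$. Thus $\mathbf{S}$ is itself a subnormal $d$-tuple, and by the same remark it is pure. It therefore suffices to show that $\mathbf{S}$ is spherically quasinormal, and by Theorem \ref{charact}, item $(v)$, this amounts to showing that $\mathcal{H}^\perp$ is invariant for $\Theta_{\mathbf{N}^*}(I) = \sum_{k=1}^d N_k N_k^*$. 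But $\mathbf{N}$ is normal, so $N_k N_k^* = N_k^* N_k$ for each $k$, whence $\Theta_{\mathbf{N}^*}(I) = \Theta_{\mathbf{N}}(I)$.

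So the key step reduces to: $\mathcal{H}^\perp$ is invariant for $\Theta_{\mathbf{N}}(I)$. This is immediate from Remark \ref{remark_theta_N}: since $\mathbf{T}$ is spherically quasinormal with normal extension $\mathbf{N}$, the operator $\Theta_{\mathbf{N}}(I)$ is block-diagonal with respect to $\mathcal{K} = \mathcal{H} \oplus \mathcal{H}^\perp$, namely $\Theta_{\mathbf{N}}(I) = \Theta_{\mathbf{T}}(I) \oplus \Theta_{\mathbf{S}}(I)$. A block-diagonal operator leaves both summands invariant, so in particular $\mathcal{H}^\perp$ is invariant for $\Theta_{\mathbf{N}}(I) = \Theta_{\mathbf{N}^*}(I)$. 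Applying Theorem \ref{charact} ($(v)\Rightarrow(i)$) to the pair $(\mathbf{S}, \mathbf{N}^*)$ yields that $\mathbf{S}$ is spherically quasinormal.

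I expect no serious obstacle here, since all the heavy lifting was done in Remark \ref{remark_dual} and Remark \ref{remark_theta_N}; the only point requiring a line of care is the verification that $\mathbf{N}^*$ qualifies as \emph{a} normal extension of $\mathbf{S}$ in the sense of Theorem \ref{charact}$(v)$ — this needs the upper-triangular form of $N_k^*$ relative to $\mathcal{H}^\perp \oplus \mathcal{H}$, which is exactly what Remark \ref{remark_dual} records under the purity hypothesis on $\mathbf{T}$. Combining these observations gives that $\mathbf{S}$ is pure (Remark \ref{remark_dual}) and spherically quasinormal, completing the proof.
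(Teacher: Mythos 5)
Your proposal is correct and follows essentially the same route as the paper: both reduce the claim to showing that $\mathcal{H}^\perp$ is invariant for $\Theta_{\mathbf{N^*}}(I)=\Theta_{\mathbf{N}}(I)$ and then apply Remark \ref{remark_dual} together with Theorem \ref{charact}$(v)$ to the pair $(\mathbf{S},\mathbf{N^*})$. The only cosmetic difference is that you obtain the invariance of $\mathcal{H}^\perp$ from the block-diagonal form in Remark \ref{remark_theta_N}, while the paper deduces it from the self-adjointness (positivity) of $\Theta_{\mathbf{N}}(I)$ and the invariance of $\mathcal{H}$; these are equivalent observations.
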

	
	\begin{proof}
		Let $\mathbf{N}\in\mathfrak{B}(\mathcal{K})^d$, $\mathcal{K}\supseteq\mathcal{H}$, be the minimal normal extension for $\mathbf{T}$. By Theorem \ref{charact}, $\mathcal{H}$ is an invariant subspace for $\Theta_{\mathbf{N}}(I)$. Since $\Theta_{\mathbf{N}}(I)$ is positive and $\mathbf{N}$ is normal, this is equivalent with the fact that  $\mathcal{H}^\perp$ is an invariant subspace for $\Theta_{\mathbf{N^*}}(I)$. The result now follows by Remark \ref{remark_dual} and Theorem \ref{charact}.
	\end{proof}

	We end this section by showing several spectral relations between a pure spherically quasinormal tuple, its minimal normal extension, as well as its dual.
	
	\begin{theorem}
		Let $\mathbf{T}\in\mathfrak{B}(\mathcal{H})^d$ be a pure spherically quasinormal $d$-tuple with the minimal normal extension $\mathbf{N}\in\mathfrak{B}(\mathcal{K})^d$, $\mathcal{K}\supseteq\H$, and the corresponding dual $\mathbf{S}\in\mathfrak{B}(\K\ominus\H)^d$. Then the following conditions are equivalent:
		\begin{enumerate}[$(i)$]
			\item $\mathbf{N}$ is Taylor invertible;
			\item $\mathcal{R}(\mathbf{N})$ is closed;
			\item $\mathcal{R}(\mathbf{T})$ and $\mathcal{R}(\mathbf{S})$ are closed;
			\item $\mathbf{T}$ and $\mathbf{S}$ are left Taylor invertible.
		\end{enumerate}
	\end{theorem}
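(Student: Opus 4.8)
The plan is to establish the chain $(i)\Leftrightarrow(ii)\Leftrightarrow(iii)\Leftrightarrow(iv)$, resting on two standing facts that I would record first. Since $\mathbf{T}$ is pure spherically quasinormal, Theorem \ref{injective_N} gives that $\mathbf{N}$ is injective; moreover the first paragraph of the proof of that theorem shows that \emph{any} pure spherically quasinormal tuple is injective, so, $\mathbf{S}$ being pure spherically quasinormal by the preceding theorem, both $\mathbf{T}$ and $\mathbf{S}$ are injective as well. Thus $\mathcal{N}(\mathbf{T})=\mathcal{N}(\mathbf{S})=\{0\}$ and $\mathcal{N}(\mathbf{N})=\{0\}$. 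The second fact is the orthogonal block decomposition $\Theta_{\mathbf{N}}(I)=\Theta_{\mathbf{T}}(I)\oplus\Theta_{\mathbf{S}}(I)$ relative to $\mathcal{K}=\mathcal{H}\oplus\mathcal{H}^\perp$, which is Remark \ref{remark_theta_N}. Throughout I view each tuple as a column operator, so that $\Theta_{\mathbf{T}}(I)=\mathbf{T}^*\mathbf{T}$, $\Theta_{\mathbf{S}}(I)=\mathbf{S}^*\mathbf{S}$ and $\Theta_{\mathbf{N}}(I)=\mathbf{N}^*\mathbf{N}$.

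The equivalences $(iii)\Leftrightarrow(iv)$ and $(ii)\Leftrightarrow(iii)$ are then bookkeeping. For $(iii)\Leftrightarrow(iv)$: since $\mathcal{N}(\mathbf{T})=\mathcal{N}(\mathbf{S})=\{0\}$, left Taylor invertibility of $\mathbf{T}$ and of $\mathbf{S}$ is, by the very definition, the same as closedness of $\mathcal{R}(\mathbf{T})$ and of $\mathcal{R}(\mathbf{S})$. For $(ii)\Leftrightarrow(iii)$: recall that a Hilbert space operator $A$ has closed range iff $A^*A$ does, so $\mathcal{R}(\mathbf{N})$ is closed iff $\mathcal{R}(\Theta_{\mathbf{N}}(I))$ is closed, and likewise for $\mathbf{T}$ and $\mathbf{S}$; since a (finite) direct sum of operators has closed range iff each summand does, the decomposition $\Theta_{\mathbf{N}}(I)=\Theta_{\mathbf{T}}(I)\oplus\Theta_{\mathbf{S}}(I)$ gives $(ii)\Leftrightarrow(iii)$ at once.

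For $(i)\Leftrightarrow(ii)$ I would first reduce to an invertibility statement: because $\mathbf{N}$ is injective, $\Theta_{\mathbf{N}}(I)=\mathbf{N}^*\mathbf{N}$ is an injective positive operator, so $\mathcal{R}(\mathbf{N})$ closed $\Leftrightarrow$ $\mathcal{R}(\Theta_{\mathbf{N}}(I))$ closed $\Leftrightarrow$ $\Theta_{\mathbf{N}}(I)$ invertible. It then remains to show that, for the commuting \emph{normal} tuple $\mathbf{N}$, Taylor invertibility is equivalent to invertibility of $\Theta_{\mathbf{N}}(I)=\sum_{k=1}^{d}N_k^*N_k$. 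One direction is immediate: if $\mathbf{N}$ is Taylor invertible then the Koszul complex $\mathcal{K}(\mathbf{N},\mathcal{K})$ is exact, so its first differential — which is precisely the column operator $\mathbf{N}$ — is injective with range equal to the (closed) kernel of the next differential; hence $\mathcal{R}(\mathbf{N})$ is closed, and with injectivity this forces $\Theta_{\mathbf{N}}(I)$ invertible. For the converse, I would pass to the joint spectral measure $E$ of $\mathbf{N}$ on $\mathbb{C}^d$: one has $\Theta_{\mathbf{N}}(I)=\int_{\mathbb{C}^d}\|z\|^2\,E(dz)$, whose invertibility forces $\supp(E)$ to be bounded away from the origin, i.e. $0\notin\supp(E)$; since $\sigma_T(\mathbf{N})=\supp(E)$ for a commuting normal tuple, this reads $0\notin\sigma_T(\mathbf{N})$, i.e. $\mathbf{N}$ is Taylor invertible. (An alternative self-contained argument for the converse: a normal tuple is spherically quasinormal, so $B:=\Theta_{\mathbf{N}}(I)^{-1}$ commutes with each $N_k$ and, by Fuglede--Putnam, with each $N_k^*$; then $s_k:=BN_k^*$ satisfy $\sum_k s_kN_k=I$ and assemble, via interior multiplication, into a contracting homotopy for $\mathcal{K}(\mathbf{N},\mathcal{K})$.)

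The one genuinely non-formal step is this last equivalence between Taylor invertibility of $\mathbf{N}$ and invertibility of $\Theta_{\mathbf{N}}(I)$: it is exactly here that normality of $\mathbf{N}$ is indispensable, entering either through the joint spectral measure or through the explicit Koszul homotopy. Everything else is manipulation of closed ranges together with the block decomposition of $\Theta_{\mathbf{N}}(I)$ and the injectivity afforded by purity, and these complete the cycle $(i)\Leftrightarrow(ii)\Leftrightarrow(iii)\Leftrightarrow(iv)$.
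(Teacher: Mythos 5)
Your proposal is correct and follows essentially the same route as the paper: injectivity of $\mathbf{T}$, $\mathbf{S}$, $\mathbf{N}$ from purity (Theorem \ref{injective_N} and Remark \ref{remark_dual}), the block decomposition $\Theta_{\mathbf{N}}(I)=\Theta_{\mathbf{T}}(I)\oplus\Theta_{\mathbf{S}}(I)$ of Remark \ref{remark_theta_N}, the closed-range bookkeeping via $\mathcal{R}(X)$ closed iff $\mathcal{R}(X^*X)$ closed, and the reduction of Taylor invertibility of $\mathbf{N}$ to invertibility of $\Theta_{\mathbf{N}}(I)$. The only difference is that where the paper cites \cite[Corollary 3.4]{Curto80} for the implication ``$\Theta_{\mathbf{N}}(I)$ invertible $\Rightarrow$ $\mathbf{N}$ Taylor invertible,'' you supply a self-contained argument (joint spectral measure, or the Fuglede--Putnam/Koszul-homotopy alternative), both of which are valid.
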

	
	\begin{proof}
		$(i)\Rightarrow(ii)$: This is obvious. \par 
		$(ii)\Rightarrow(iii)$: We utilize the fact that for any operator $X$, $\mathcal{R}(X)$ is closed if and only if $\mathcal{R}(X^*X)$ is closed. Thus, by treating $\mathbf{N}$ as a column vector, and under the assumption that $\mathcal{R}(\mathbf{N})$ is closed, we have that $\Theta_{\mathbf{N}}(I)$ has closed range. Using \eqref{theta_N_representation}, it follows that $\Theta_{\mathbf{T}}(I)$ and $\Theta_{\mathbf{S}}(I)$ are closed-range operators, i.e., $\mathcal{R}(\mathbf{T})$ and $\mathcal{R}(\mathbf{S})$ are closed.\par 
		$(iii)\Rightarrow(iv)$: Since $\mathbf{T}$ is pure, by the proof of Theorem \ref{injective_N}, it must be $\mathcal{N}(\mathbf{T})=\{0_{\mathcal{H}}\}$, which together with the closedness of  $\mathcal{R}(\mathbf{T})$, imply that $\mathbf{T}$ is left Taylor invertible. By Remark \ref{remark_dual}, we have that $\mathbf{S}$ is also pure, so the similar arguments yield the left Taylor invertibility of $\mathbf{S}$.\par 
		$(iv)\Rightarrow(i)$: Now assume that $\mathbf{T}$ and $\mathbf{S}$ are left Taylor invertible. Since $\mathcal{N}(\Theta_{\mathbf{T}}(I))=\mathcal{N}(\mathbf{T})$ and $\mathcal{R}(\Theta_{\mathbf{T}}(I))$ is closed if and only if $\mathcal{R}(\mathbf{T})$ is closed, we have that $\Theta_{\mathbf{T}}(I)$ is left invertible, which is, due to the positivity of $\Theta_{\mathbf{T}}(I)$, equivalent with the invertibility of $\Theta_{\mathbf{T}}(I)$. Similarly, $\Theta_{\mathbf{S}}(I)$ is an invertible operator. Now, \eqref{theta_N_representation} implies that $\Theta_{\mathbf{N}}(I)$ is invertible, and thus $\mathbf{N}$ is Taylor invertible by  \cite[Corollary 3.4]{Curto80} (or \cite[Corollary 3.9]{Curto81b}).
	\end{proof}

	The previous theorem can also be treated a multivariable analogue of \cite[Theorem 3.6]{CvetkovicIlicStankovic24}.
	\begin{theorem}
		Let $\mathbf{T}\in\mathfrak{B}(\mathcal{H})^d$ be a pure subnormal $d$-tuple with the minimal normal extension $\mathbf{N}\in\mathfrak{B}(\mathcal{K})^d$, $\mathcal{K}\supseteq\H$, and the corresponding dual $\mathbf{S}\in\mathfrak{B}(\K\ominus\H)^d$. Then $\mathbf{T}$ is Taylor invertible if and only if $\mathbf{S}$ is Taylor invertible.
	\end{theorem}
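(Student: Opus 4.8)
The plan is to localize everything at the point $0$ and to combine a pointwise version of the spectral inclusion for the minimal normal extension with the exact-sequence property of the Taylor spectrum. By Remark~\ref{remark_dual} the tuple $\mathbf{N}^{*}=(N_{1}^{*},\dots,N_{d}^{*})$ is the minimal normal extension of the (automatically pure) subnormal tuple $\mathbf{S}$, and reading off the $(2,2)$-block of $\mathbf{N}^{*}$ one sees that the dual of $\mathbf{S}$ is $\mathbf{T}$ again. Also, for any commuting tuple $\mathbf{A}$ the Koszul complex of $\mathbf{A}^{*}$ at $0$ is the adjoint of the Koszul complex of $\mathbf{A}$ at $0$, and a bounded complex of Hilbert spaces is exact if and only if its adjoint complex is; hence $\mathbf{A}$ is Taylor invertible if and only if $\mathbf{A}^{*}$ is. Consequently it suffices to establish the single implication ``$\mathbf{T}$ Taylor invertible $\Rightarrow$ $\mathbf{S}$ Taylor invertible'', since the reverse implication is exactly this statement applied to the triple $(\mathbf{S},\mathbf{N}^{*},\mathbf{T})$.

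So assume $\mathbf{T}$ is Taylor invertible. First I would show $0\notin\sigma_{T}(\mathbf{N})$. As $\mathbf{N}$ is a commuting normal tuple, $\sigma_{T}(\mathbf{N})=\supp E$ where $E$ is its joint spectral measure, and, by Fuglede--Putnam, $E$ commutes with every $N_{k}$ and every $N_{k}^{*}$. Minimality of $\mathbf{N}$ yields $\mathcal{K}=\overline{\operatorname{span}}\{q(\mathbf{N}^{*})h:\ q\ \text{a polynomial},\ h\in\mathcal{H}\}$, because this subspace contains $\mathcal{H}$, is trivially $\mathbf{N}^{*}$-invariant, and is $\mathbf{N}$-invariant since $N_{k}q(\mathbf{N}^{*})h=q(\mathbf{N}^{*})T_{k}h$. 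If $0\in\supp E$ then $P_{n}:=E(\{\,\norm{z}<1/n\,\})\neq 0$ for every $n$; as $P_{n}$ commutes with $\mathbf{N}^{*}$, the vanishing of $P_{n}$ on $\mathcal{H}$ would force $P_{n}$ to vanish on the above spanning set, hence on $\mathcal{K}$ --- impossible. Choosing unit vectors $h_{n}\in\mathcal{H}$ with $P_{n}h_{n}=h_{n}$ we get $\scal{\Theta_{\mathbf{T}}(I)h_{n}}{h_{n}}=\sum_{k}\norm{N_{k}h_{n}}^{2}=\int_{\norm{z}<1/n}\norm{z}^{2}\,d\scal{Eh_{n}}{h_{n}}\le n^{-2}$, so $\Theta_{\mathbf{T}}(I)$ is not bounded below. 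But Taylor invertibility of $\mathbf{T}$ forces, in particular, left Taylor invertibility of $\mathbf{T}$, so $\Theta_{\mathbf{T}}(I)$ --- whose kernel is $\mathcal{N}(\mathbf{T})$ and whose range is closed precisely when $\mathcal{R}(\mathbf{T})$ is --- is invertible; a contradiction. Hence $\mathbf{N}$, and therefore $\mathbf{N}^{*}$, is Taylor invertible.

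The final step is the long exact sequence of Koszul homologies at $0$. The inclusion $\mathcal{H}\hookrightarrow\mathcal{K}$ is $\mathbf{N}$-invariant with restriction $\mathbf{T}$, and the quotient tuple on $\mathcal{K}/\mathcal{H}\cong\mathcal{H}^{\perp}$ is exactly $\mathbf{S}^{*}$ (the $(2,2)$-block in \eqref{dual}). The short exact sequence $0\to\mathcal{H}\to\mathcal{K}\to\mathcal{K}/\mathcal{H}\to 0$ induces a short exact sequence of Koszul complexes at $0$, hence a long exact sequence $\cdots\to H_{p}(\mathbf{T})\to H_{p}(\mathbf{N})\to H_{p}(\mathbf{S}^{*})\to H_{p-1}(\mathbf{T})\to\cdots$ in homology. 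Since $\mathbf{T}$ and $\mathbf{N}$ are Taylor invertible, all $H_{p}(\mathbf{T})$ and $H_{p}(\mathbf{N})$ vanish, so $H_{p}(\mathbf{S}^{*})=0$ for all $p$; that is, $\mathbf{S}^{*}$, and hence $\mathbf{S}$, is Taylor invertible. I expect the first step --- the pointwise spectral inclusion, and especially extracting the spanning description of $\mathcal{K}$ from the minimality of $\mathbf{N}$ and carrying out the spectral-measure estimate --- to be the delicate part; the remainder uses only the standard exact-sequence property of the Taylor spectrum, the identification $\sigma_{T}(\mathbf{N})=\supp E$ for the normal tuple $\mathbf{N}$, and the invariance of exactness of Hilbert-space complexes under adjunction.
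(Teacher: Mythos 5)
There is a genuine gap in your second step, the one that is supposed to derive $0\notin\sigma_T(\mathbf{N})$ from the Taylor invertibility of $\mathbf{T}$. From the minimality of $\mathbf{N}$ you correctly conclude that no $P_n=E(\{\norm{z}<1/n\})$ can annihilate $\mathcal{H}$, i.e.\ that there exists $h\in\mathcal{H}$ with $P_nh\neq 0$. But you then ``choose unit vectors $h_n\in\mathcal{H}$ with $P_nh_n=h_n$'', which requires the much stronger statement $\mathcal{R}(P_n)\cap\mathcal{H}\neq\{0\}$; the range of $P_n$ is a reducing subspace for $\mathbf{N}$ and has no reason to meet $\mathcal{H}$, so the estimate $\scal{\Theta_{\mathbf{T}}(I)h_n}{h_n}\le n^{-2}$ is unavailable. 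The gap is not cosmetic: the only hypothesis your contradiction actually uses is that $\Theta_{\mathbf{T}}(I)$ is invertible (i.e.\ left Taylor invertibility of $\mathbf{T}$), and the implication ``$\Theta_{\mathbf{T}}(I)$ invertible $\Rightarrow 0\notin\supp E$'' is false. Already for $d=1$, the Bergman shift $T=M_z$ on $L^2_a(\mathbb{D})$ is pure, subnormal and bounded below ($\norm{Tf}^2\ge\tfrac12\norm{f}^2$), while its minimal normal extension $M_z$ on $L^2(\mathbb{D})$ has $0\in\supp E$; here $\mathcal{R}(P_n)=L^2(\{\aps{z}<1/n\})$ meets the Bergman space only in $\{0\}$, which is exactly where your construction breaks down. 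So no argument along these lines can succeed: one genuinely needs the full Taylor invertibility of $\mathbf{T}$ (in particular exactness at the final stage of the Koszul complex), and the efficient way to obtain $0\notin\sigma_T(\mathbf{N})$ is Putinar's spectral inclusion $\sigma_T(\mathbf{N})\subseteq\sigma_T(\mathbf{T})$ for minimal normal extensions, which is precisely what the paper cites at this point.

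The remainder of your proposal is sound and follows the same architecture as the paper's proof. The reduction to a single implication via Remark~\ref{remark_dual} and the adjoint Koszul complex is what the paper means by ``the converse can be done in a similar manner'', and your long exact homology sequence associated with $0\to\mathcal{H}\to\mathcal{K}\to\mathcal{K}/\mathcal{H}\to 0$, which transfers Taylor invertibility from $\mathbf{T}$ and $\mathbf{N}$ to the quotient tuple $\mathbf{S^*}$ read off from \eqref{dual}, is exactly the content of the two lemmas of Curto that the paper invokes. If you replace your spectral-measure argument by a citation of (or a correct proof of) Putinar's inclusion theorem, the proof goes through.
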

	
	\begin{proof}
		Assume that $\mathbf{T}$ is Taylor invertible. By \cite{Putinar84} (cf. \cite[Theorem 1]{Curto81a}), we have that 
		\begin{equation*}
			\sigma_T(\mathbf{N})\subseteq\sigma_T(\mathbf{T}),
		\end{equation*}
		and thus, $\mathbf{N}$ is Taylor invertible. The Taylor invertibility of $\mathbf{S}$ now follows from \cite[Lemma 4.3]{Curto80} and \cite[Lemma 4.4]{Curto80}. \par 
		The converse can be done in a similar manner, having in mind Remark \ref{remark_dual}.
	\end{proof}
	
	Since every spherically quasinormal tuple is subnormal (see \cite[Proposition 2.1]{AthavalePodder15}), we have the following:
	\begin{corollary}
		Let $\mathbf{T}\in\mathfrak{B}(\mathcal{H})^d$ be a pure spherically quasinormal $d$-tuple with the minimal normal extension $\mathbf{N}\in\mathfrak{B}(\mathcal{K})^d$, $\mathcal{K}\supseteq\H$, and the corresponding dual $\mathbf{S}\in\mathfrak{B}(\K\ominus\H)^d$. Then $\mathbf{T}$ is Taylor invertible if and only if $\mathbf{S}$ is Taylor invertible.
	\end{corollary}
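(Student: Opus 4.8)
The plan is to reduce this statement to the preceding theorem, which already establishes the equivalence ``$\mathbf{T}$ Taylor invertible $\iff$ $\mathbf{S}$ Taylor invertible'' for \emph{pure subnormal} $d$-tuples. First I would observe that every spherically quasinormal tuple is subnormal, which is exactly \cite[Proposition 2.1]{AthavalePodder15} (and is also recorded in the chain of implications in the Introduction). Consequently, a pure spherically quasinormal $\mathbf{T}$ is in particular a pure subnormal $d$-tuple, its minimal normal extension $\mathbf{N}$ is the one appearing in the previous theorem, and its dual $\mathbf{S}\in\mathfrak{B}(\K\ominus\H)^d$ — defined via the block form \eqref{dual} — is precisely the dual considered there. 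Thus one may invoke the previous theorem verbatim to conclude that $\mathbf{T}$ is Taylor invertible if and only if $\mathbf{S}$ is.

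The only point that warrants a line of verification is that no additional hypothesis on $\mathbf{S}$ is needed: by Remark \ref{remark_dual}, whenever $\mathbf{T}$ is subnormal the dual $\mathbf{S}$ is automatically pure, and when $\mathbf{T}$ is moreover pure, $\mathbf{N}^*$ is the minimal normal extension of $\mathbf{S}$. Hence the hypotheses of the preceding theorem are met with no further work, and the reduction is complete.

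I do not expect any genuine obstacle here; the entire content lies in the previous theorem, and this corollary is a pure specialization. Should one prefer a self-contained argument rather than a citation of the theorem, one could instead reproduce its proof directly: Taylor invertibility of $\mathbf{T}$ gives $\sigma_T(\mathbf{N})\subseteq\sigma_T(\mathbf{T})$ by \cite{Putinar84} (cf. \cite[Theorem 1]{Curto81a}), hence $\mathbf{N}$ is Taylor invertible, and then \cite[Lemma 4.3]{Curto80} together with \cite[Lemma 4.4]{Curto80} transfers invertibility to $\mathbf{S}$; the converse is symmetric in view of Remark \ref{remark_dual}. But this merely duplicates the earlier argument, so the short proof via the preceding theorem is the one I would write.
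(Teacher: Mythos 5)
Your proposal is correct and matches the paper exactly: the corollary is deduced from the preceding theorem by noting that every spherically quasinormal tuple is subnormal (citing \cite[Proposition 2.1]{AthavalePodder15}), which is precisely the one-line justification the paper gives. No further comment is needed.
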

	
	\bigskip 
	
	\section*{Declarations}
	%=====================
	\noindent{\bf{Availability of data and materials}}\\
	\noindent No data were used to support this study.
	%=====================
	\vspace{0.5cm}\\
	%=====================
	\noindent{\bf{Conflict of interest}}\\
	\noindent The author declares that he has no conflict of interest.
	%=====================
	\vspace{0.5cm}
	
	%=====================
	%\noindent{\bf{Acknowledgements}}\\
	%The author would like to thank the referee for very useful comments and suggestions.\par\phantom{}
	%=====================
	
	\vspace{0.5cm}

\end{document}